\newcommand*\patchAmsMathEnvironmentForLineno[1]{%
 \expandafter\let\csname old#1\expandafter\endcsname\csname #1\endcsname
 \expandafter\let\csname oldend#1\expandafter\endcsname\csname end#1\endcsname
 \renewenvironment{#1}%
    {\linenomath\csname old#1\endcsname}%
    {\csname oldend#1\endcsname\endlinenomath}}%
\newcommand*\patchBothAmsMathEnvironmentsForLineno[1]{%
 \patchAmsMathEnvironmentForLineno{#1}%
 \patchAmsMathEnvironmentForLineno{#1*}}%
\DeclareMathOperator{\tw}{tw}
\DeclareMathOperator{\dist}{dist}
\DeclareMathOperator{\diam}{diam}
\newcommand\subsetcong{\mathrel{\text{%
    \setbox0\hbox{$\subseteq$}%
    \rlap{\hbox to \wd0{\hss\hss\hss\raisebox{1.5\height}{$\sim$}\hss}}\box0
}}}
\renewcommand{\subsetcong}{\subseteq}
\renewcommand{\le}{\leqslant}
\renewcommand{\ge}{\geqslant}
\renewcommand{\geq}{\geqslant}
\title{\MakeUppercase{Bounded-Degree Planar Graphs Do Not Have Bounded-Degree Product Structure}}
\author{%
  Vida Dujmović\thanks{Department of Computer Science and Electrical Engineering, University of Ottawa, Ottawa, Canada.}\, \thanks{Research partly supported by NSERC.} \quad
  Gwenaël Joret\thanks{D\'epartement d'Informatique, Universit\'e Libre de Bruxelles, Brussels, Belgium.
  Supported by a CDR grant and a PDR grant from the Belgian National Fund for Scientific Research (FNRS).} \quad
  Piotr Micek\thanks{Theoretical Computer Science Department, Faculty of Mathematics and Computer Science, Jagiellonian University, Krak\'ow, Poland.}\quad
  Pat Morin\thanks{School of Computer Science, Carleton University, Ottawa, Canada}\, \footnotemark[2] \quad
  David R. Wood\thanks{School of Mathematics, Monash University, Melbourne, Australia.
  Research supported by the Australian Research Council.}
}
\date{}
\begin{document}
\maketitle

\begin{abstract}
   Product structure theorems are a collection of recent results that have been used to resolve a number of longstanding open problems on planar graphs and related graph classes.  One particularly useful version states that every planar graph $G$ is contained in the strong product of a $3$-tree $H$, a path $P$, and a $3$-cycle $K_3$; written as $G\subsetcong H\boxtimes P\boxtimes K_3$.  A number of researchers have asked if this theorem can be strengthened so that the maximum degree in $H$ can be bounded by a function of the maximum degree in $G$.  We show that no such strengthening is possible.  Specifically, we describe an infinite family $\mathcal{G}$ of planar graphs of maximum degree $5$ such that, if an $n$-vertex member $G$ of $\mathcal{G}$ is isomorphic to a subgraph of $H\boxtimes P\boxtimes K_c$ where $P$ is a path and $H$ is a graph of maximum degree $\Delta$ and treewidth $t$, then $t\Delta c \ge 2^{\Omega(\sqrt{\log\log n})}$.
\end{abstract}

\section{Introduction}

Recently, product structure theorems have been a key tool in resolving a number of longstanding open problems on planar graphs.  Roughly, a \defin{product structure theorem} for a graph family $\mathcal{G}$ states that every graph in $\mathcal{G}$ is isomorphic to a subgraph of the product of two or more ``simple'' graphs.  As an example, there are a number of graph classes $\mathcal{G}$ for which there exists integers $t$ and $c$ such that, for each $G\in\mathcal{G}$ there is a graph $H$ of treewidth\footnote{A \defin{tree decomposition} of a graph $H$ is a collection $\mathcal{T}:=(B_x:x\in V(T))$ of subsets of $V(H)$ indexed by the nodes of some tree $T$ such that
\begin{inparaenum}[(i)]
  \item for each $v\in V(H)$, the induced subgraph $T[x\in V(T):v\in B_x]$ is connected; and
  \item for each edge $vw\in E(H)$, there exists some $x\in V(T)$ with $\{v,w\}\subsetcong B_x$.
\end{inparaenum}
The \defin{width} of such a tree decomposition is $\max\{|B_x|:x\in V(T)\}-1$. The \defin{treewidth} of $H$ is the minimum width of any tree decomposition of $H$.} $t$ and a path $P$ such that $G$ is isomorphic to a subgraph of the strong product\footnote{The \defin{strong product} $G_1\boxtimes G_2$ of two graphs $G_1$ and $G_2$ is the graph with vertex-set $V(G_1\boxtimes G_2):=V(G_1)\times V(G_2)$ and that includes the edge with endpoints $(v,x)$ and $(w,y)$ if and only if
\begin{inparaenum}[(i)]
  \item $vw\in E(G_1)$ and $x=y$;
  \item $v=w$ and $xy\in E(G_2)$; or
  \item $vw\in E(G_1)$ and $xy\in E(G_2)$.
\end{inparaenum}
} of $H$, $P$, and a clique $K$ of order $c$.
This is typically written as $G\subsetcong H\boxtimes P\boxtimes K_c$, where the notation $G_1\subsetcong G_2$ is used to mean that $G_1$ is isomorphic to some subgraph of $G_2$.  See references \cite{dujmovic.joret.ea:planar,dujmovic.morin.ea:structure,krauthgamer.lee:intrinsic,ueckerdt.wood.ea:improved,bose.morin.ea:optimal,campbell.clinch.ea:product,illingworth.scott.ea:alon,distel.hickingbotham.ea:improved,hickingbotham.jungeblut.ea:product,hickingbotham.wood:shallow,wood:product} for examples.

In some applications of product structure theorems it is helpful if, in addition to having treewidth $t$, the graph $H$ has additional properties, possibly inherited from $G$.  For example, one very useful version of the planar graph product structure theorem states that for every planar graph $G$ there exists a \emph{planar} graph $H$ of treewidth $3$ and a path $P$ such that $G\subsetcong H\boxtimes P\boxtimes K_3$ \cite[Theorem~36(b)]{dujmovic.joret.ea:planar}.  The planarity of $H$ in this result has been leveraged to obtain better constants and even asymptotic improvements for graph colouring and layout problems, including queue number \cite{dujmovic.joret.ea:planar}, $p$-centered colouring \cite{debski.felsner.ea:improved}, and $\ell$-vertex ranking \cite{bose.dujmovic.ea:asymptotically}.

In this vein, the authors have been repeatedly asked if $H$ can have bounded degree whenever $G$ does; that is:
\begin{quote}
  For each $\Delta\in\N$, let $\mathcal{G}_\Delta$ be the family of planar graphs of maximum degree $\Delta$.  Do there exist functions $t:\N\to\N$, $d:\N\to\N$, and $c:\N\to\N$ such that, for each $\Delta\in\N$ and each $G\in\mathcal{G}_\Delta$ there exists a graph $H$ of treewidth at most $t(\Delta)$ and maximum degree $d(\Delta)$ and a path $P$ such that $G\subsetcong H\boxtimes P\boxtimes K_{c(\Delta)}$?
\end{quote}
In the current paper we show that the answer to this question is no, even when $\Delta=5$.

\begin{thm}\label{main_thm}
  For infinitely many integers $n\ge 1$, there exists an $n$-vertex planar graph $G$ of maximum degree $5$ such that, for every graph $H$ of treewidth $t$ and maximum degree $\Delta$, every path $P$, and every integer $c$, if $G\subsetcong H\boxtimes P\boxtimes K_c$ then $t\Delta c \ge 2^{\Omega(\sqrt{\log\log n})}$.
\end{thm}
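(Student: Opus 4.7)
My plan is to construct, for each integer $k\geq 1$, a planar graph $G_k$ of maximum degree $5$ with a recursive multi-scale structure, and to identify an obstruction parameter $\Phi(G)$ with two properties: (a) $\Phi(G_k)$ grows with $k$, and (b) for every $G\subseteq H\boxtimes P\boxtimes K_c$ with $\tw(H)\leq t$ and $\Delta(H)\leq\Delta$, $\Phi(G)$ is bounded by a polynomial in $t\Delta c$. Combining (a) and (b) and calibrating the size $n_k$ of $G_k$ will then yield the desired inequality $t\Delta c\geq 2^{\Omega(\sqrt{\log\log n})}$. Note that $\Phi$ cannot simply be treewidth: the $\sqrt n\times\sqrt n$ grid is a planar graph of maximum degree $4$ with $\tw=\Theta(\sqrt n)$ that sits inside $P\boxtimes P\boxtimes K_1$ with $t\Delta c=2$, so $\Phi$ must capture multi-scale nesting rather than a single global width.

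The graphs $G_k$ would be built inductively: $G_1$ is a small planar gadget, and $G_{k+1}$ is formed by arranging many disjoint copies of $G_k$ inside the faces of a sparse planar ``skeleton'' whose role is both to connect the copies and to introduce many nested separating cycles, while using few enough extra edges to keep the maximum degree bounded by $5$. With appropriate parameter choices, $n_k$ grows roughly like $2^{2^{k}}$, so that $\sqrt{\log\log n_k}=\Theta(\sqrt{k})$, and a lower bound of the form $\Phi(G_k)\geq 2^{\Omega(\sqrt k)}$ then translates into the claimed $2^{\Omega(\sqrt{\log\log n})}$.

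The central technical step is establishing (b). The structural features of $H\boxtimes P\boxtimes K_c$ to exploit are: (i)~the projection of any connected subgraph of the product to $H$ is connected; (ii)~its projection to $P$ is an interval; and (iii)~each fiber over a vertex of $H\boxtimes P$ contains at most $c$ vertices of $G$. Given a witness that $\Phi(G)$ is large---a long chain of deeply-nested separating regions---I would project it to $H$ and then, by pigeonhole over the intervals in $P$, find a single row of the product containing a substantial portion of the chain. Both hypotheses on $H$ then enter in bounding the length of such a chain within one row: the treewidth hypothesis globally limits how many nested connected separators $H$ can accommodate, while the degree hypothesis locally restricts how tightly two consecutive separators can interleave, since every boundary vertex has only $\Delta$ neighbors in $H$.

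The main obstacle is choosing $\Phi$ so that (a) and (b) can be proved with matching polynomial bounds. A naive ``nesting depth of separators'' may be too weak, and planarity severely restricts the attainable structures: bounded-degree planar graphs have $O(\sqrt n)$ balanced separators and thus cannot host expander-type obstructions, so $\Phi$ must be tailored to detect a specifically \emph{nested} rather than merely \emph{large} family of separators. The $\sqrt{\,}$ appearing in the final bound strongly suggests that each level of the recursion is worth only an $O(1/\sqrt k)$ factor in the lower bound, so the argument must amortize over all $k$ levels rather than multiplying level contributions naively---this will in turn dictate both the number of copies of $G_k$ placed at level $k+1$ and the precise design of the connecting skeleton.
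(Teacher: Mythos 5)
Your proposal is a strategy outline rather than a proof, and the central technical content is missing at exactly the points where the difficulty lies. You never define the obstruction parameter $\Phi$, never specify the recursive construction of $G_k$ beyond ``copies inside faces of a sparse skeleton,'' and never prove either of your claims (a) or (b); you explicitly flag the choice of $\Phi$ as ``the main obstacle,'' which is to say the proof is not there. Worse, the one mechanism you do sketch for (b) is unsound as stated: bounded treewidth does \emph{not} ``globally limit how many nested connected separators $H$ can accommodate'' (a path has treewidth $1$ and admits arbitrarily many nested separators), so a parameter counting nested separating regions cannot be bounded by a polynomial in $t\Delta c$ without substantial further structure that you have not identified. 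The degree hypothesis likewise cannot be used only ``locally'' in the loose way you describe; some concrete argument must convert the pair (treewidth $t$, degree $\Delta$) into usable structure.

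For contrast, the paper's route makes both of these steps concrete in a quite different way. The example is a single explicit family: the complete binary tree of height $h$ with all vertices of each depth joined into a path (maximum degree $5$, $n=2^{h+1}-1$). The degree and treewidth of $H$ are combined once and for all via the Ding--Oporowski theorem, which turns $H$ into a tree-partition of width $O(\Delta t)$, reducing the whole problem to showing that $G_h\subseteq T\boxtimes P\boxtimes K_c$ with $T$ a \emph{tree} forces $c\ge 2^{\Omega(\sqrt{\log h})}$. That lower bound is then proved not by a nested-separator invariant but by an iterative ``percolation'' argument: starting from a large unrelated set inside one bag (obtained from a separator lemma plus a grid-connectivity lemma exploiting the depth-paths $D_i$), one repeatedly doubles a compact family inside successive bags, using the fact that all neighbours of a bag lying in one component of its complement sit in a single adjacent bag; after $t\approx\sqrt{\log h}$ doublings one bag meets a single layer of the path-partition in $2^{\Omega(\sqrt{\log h})}$ vertices, contradicting $|B_x\cap P_y|\le c$. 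The $\sqrt{\log\log n}$ arises from balancing the number of doubling steps against $c$, not from an $O(1/\sqrt k)$ per-level amortization as you conjecture. As it stands, your proposal would need the missing definition of $\Phi$, the missing construction, and a correct replacement for the unsupported treewidth/degree step before it could be assessed as a proof.
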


The graph family $\mathcal{G}:=\{G_h:h\in\N\}$ that establishes \cref{main_thm} consists of complete binary trees of height $h$ augmented with edges to form, for each $i\in\{1,\ldots,h\}$, a path $D_i$ that contains all vertices of depth $i$.  See \cref{G_5}.

\begin{figure}
  \begin{center}
    \includegraphics{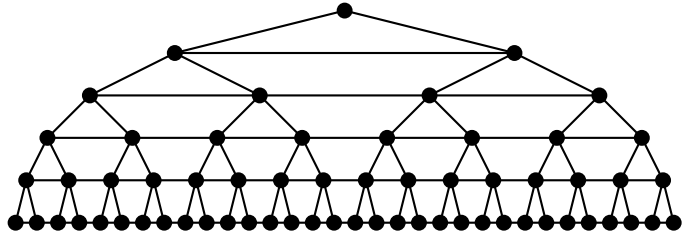}
  \end{center}
  \caption{The graph $G_5$ from the graph family $\{G_h:h\in\N\}$ that establishes \cref{main_thm}.}
  \label{G_5}
\end{figure}

\section{Proof of \cref{main_thm}}

Throughout this paper, all graphs $G$ are simple and undirected with vertex-set $V(G)$ and edge-set $E(G)$.  For a set $S$, $G[S]$ denotes the subgraph of $G$ induced by $S\cap V(G)$ and $G-S:=G[V(G)\setminus S]$.  For every $v\in V(G)$, let $N_G(v):=\{w:vw\in E(G)\}$ and for every $S\subseteq V(G)$, let $N_G(S):=\bigcup_{v\in S} N_G(v)\setminus S$.  We write $G_1\cong G_2$ if $G_1$ and $G_2$ are isomorphic and $G_1\subsetcong G_2$ if $G_1$ is isomorphic to some subgraph of $G_2$.  Inside of asymptotic notation, $\log n:=\max\{1,\log_2 n\}$.

\subsection{Partitions}

Let $G$ and $H$ be graphs.  An \defin{$H$-partition} $\mathcal{H}:=\{B_x:x\in V(H)\}$ of $G$ is a partition of $V(G)$ whose parts are indexed by the vertices of $H$ with the property that, if $vw$ is an edge of $G$ with $v\in B_x$ and $w\in B_y$ then $x=y$ or $xy\in E(H)$.  The \defin{width} of $\mathcal{H}$ is the size of its largest part; that is, $\max\{|B_x|:x\in V(H)\}$.
If $H$ is in a class $\mathcal{G}$ of graphs then we may call $\mathcal{H}$ a $\mathcal{G}$-partition of $G$.  Specifically, if $H$ is a tree, then $\mathcal{H}$ is a \defin{tree-partition} of $G$ and if $H$ is a path, then $\mathcal{H}$ is a \defin{path-partition} of $G$.  A path-partition $\mathcal{P}:=\{P_x:x\in V(P)\}$ of $G$ is also referred to as a \defin{layering} of $G$ and the parts of $\mathcal{P}$ are referred to as \defin{layers}.  A set of layers $\{P_{x_1},\ldots,P_{x_q}\}\subseteq\mathcal{P}$ is \defin{consecutive} if $P[\{x_1,\ldots,x_q\}]$ is connected.

As in previous works, we make use of the following relationship between $H$-partitions and strong products, which follows immediately from the preceding definitions.

\begin{obs}\label{partitions_vs_products}
  For every integer $c\ge 1$, and all graphs $G$, $H$, and $J$,  $G\subsetcong H\boxtimes J\boxtimes K_c$ if and only if $G$ has an $H$-partition $\mathcal{H}:=\{B_x:x\in V(H)\}$ and a $J$-partition $\mathcal{J}:=\{C_y:y\in V(J)\}$ such that $|B_x\cap C_y|\le c$, for each $(x,y)\in V(H)\times V(J)$.
\end{obs}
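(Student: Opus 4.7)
The statement is a standard dictionary between two equivalent descriptions of a ``blown-up'' embedding, so my plan is to prove both directions by unpacking the definitions of strong product, $H$-partition, and $J$-partition.

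For the forward direction, I would start from an embedding $\phi\colon V(G)\to V(H)\times V(J)\times V(K_c)$ witnessing $G\subsetcong H\boxtimes J\boxtimes K_c$, and set
\[ B_x := \{v\in V(G) : \phi(v)\in\{x\}\times V(J)\times V(K_c)\}, \qquad C_y := \{v\in V(G) : \phi(v)\in V(H)\times\{y\}\times V(K_c)\}. \]
Both collections partition $V(G)$, and the injectivity of $\phi$ gives $|B_x\cap C_y|\le |V(K_c)|=c$ immediately. If $vw\in E(G)$ with $v\in B_x$ and $w\in B_{x'}$, then $\phi(v)$ and $\phi(w)$ are adjacent in $H\boxtimes J\boxtimes K_c$, which by definition of the strong product forces their first coordinates to satisfy $x=x'$ or $xx'\in E(H)$. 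Thus $\{B_x:x\in V(H)\}$ is an $H$-partition of $G$, and the same argument with the second coordinate shows $\{C_y:y\in V(J)\}$ is a $J$-partition.

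For the backward direction, given the two partitions, let $x(v)$ and $y(v)$ be the unique indices with $v\in B_{x(v)}\cap C_{y(v)}$. Since $|B_x\cap C_y|\le c$, I can fix for every nonempty intersection $B_x\cap C_y$ an injection from its elements into $V(K_c)$, producing a third coordinate $z(v)$. Define $\phi(v):=(x(v),y(v),z(v))$. This map is injective: two vertices in the same part $B_x\cap C_y$ are separated by their $z$-coordinates, and two vertices in different parts already differ in the first or second coordinate. For any edge $vw\in E(G)$, the $H$- and $J$-partition axioms give $x(v)=x(w)$ or $x(v)x(w)\in E(H)$, and similarly for $y$; on the third coordinate, if $z(v)\ne z(w)$ then $z(v)z(w)\in E(K_c)$, and if $z(v)=z(w)$ then $v,w$ lie in different intersections, so $\phi(v)\ne\phi(w)$ already via the first two coordinates. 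In every case $\phi(v)$ and $\phi(w)$ are adjacent in $H\boxtimes J\boxtimes K_c$.

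I do not anticipate any real obstacle: the content of the observation is precisely that the ``$K_c$ factor'' in the strong product is exactly what is needed to absorb the bound $|B_x\cap C_y|\le c$, and the argument is symmetric in the two partitions. This is why the authors record it only as an observation and use it as a translation tool throughout the remainder of the paper.
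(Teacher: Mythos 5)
Your proof is correct and is exactly the definition-unpacking argument the paper has in mind: the paper states this as an observation that ``follows immediately from the preceding definitions'' and omits the details, and your two directions (projecting the embedding onto the first two coordinates, and conversely using the bound $|B_x\cap C_y|\le c$ to inject each intersection into $V(K_c)$) fill them in correctly. No gaps.
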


The following important result of \citet{ding.oporowski:some} (also see \cite{wood:on,distel.wood:tree_partitions}) allows us to focus on the case where the first factor in our product is a tree.

\begin{thm}[\citet{ding.oporowski:some}]\label{dingy}
  If $H$ is a graph with maximum degree $\Delta$ and treewidth $t$,
  then $H$ has a tree-partition of width at most $24\Delta(t+1)$.
\end{thm}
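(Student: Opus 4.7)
The plan is to convert a width-$t$ tree decomposition of $H$ into a tree-partition, using the degree bound to control how many vertices accumulate at each node.

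I would begin with a tree decomposition $(T,\{B_x\}_{x\in V(T)})$ of $H$ with $|B_x|\leq t+1$ for all $x$, rooted at an arbitrary node. For each $v\in V(H)$, let $T_v$ be the subtree of $T$ whose bags contain $v$, and let $\mathrm{top}(v)$ be the node of $T_v$ closest to the root. Setting $A_x:=\{v:\mathrm{top}(v)=x\}$ yields a partition of $V(H)$ with $A_x\subseteq B_x$, so each $|A_x|\leq t+1$. Moreover, for any edge $vw\in E(H)$, some bag contains both endpoints, so the subtrees $T_v$ and $T_w$ meet; hence one of $\mathrm{top}(v),\mathrm{top}(w)$ is an ancestor of the other in $T$. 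This produces a partition whose ``adjacency graph'' lies on root-to-leaf paths, but is not yet a tree-partition because edges may run between ancestor and descendant parts that are not parent--child.

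The key step is to collapse ancestor-descendant edges into parent-child ones, using the degree bound to bound the cost. Call $v$ a \emph{passenger} at a node $z$ on the path between $\mathrm{top}(v)$ and $\mathrm{top}(w)$ if $vw\in E(H)$ and $v\in B_z\setminus A_z$; then $v$ traverses the $\mathrm{top}(v)$-to-$\mathrm{top}(w)$ path of $T$ entirely ``in transit''. Because $H$ has maximum degree $\Delta$, the total number of passenger-traversals contributed by $v$ across all of $T$ is at most $\Delta$, since each one is witnessed by a distinct neighbor of $v$. I would then modify $T$ into a new tree $T'$ and reassign each passenger to one of the two endpoints of the edge it traverses so that all adjacencies become parent-child adjacencies. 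Along any maximal path of degree-$2$ nodes in $T$, almost every vertex of every intermediate bag is a passenger, so I contract such paths and charge the contracted parts via the global passenger budget; along branching subtrees of $T$ one merges $A_x$ with a controlled number of nearby $A_y$'s to absorb transit vertices into a single part.

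The main obstacle, and where the constant $24$ emerges, is the bookkeeping when assigning passengers to endpoints and amortizing across long degree-$2$ paths: one must simultaneously maintain that (i) the resulting partition is a genuine tree-partition (edges go between adjacent nodes of $T'$), and (ii) each part has size $O(\Delta(t+1))$. The clean way to organize this is to first produce an intermediate ``path-contracted'' tree decomposition whose bag sizes are bounded in terms of both $t+1$ and $\Delta$, then apply the $\mathrm{top}$-assignment above to that refined decomposition. The final additive constant in $24\Delta(t+1)$ accounts for at most a bounded number of layers of merging in the contraction step; aside from this constant-factor accounting, the argument is driven entirely by the two inequalities $|A_x|\leq t+1$ and ``passenger traffic per vertex $\leq\Delta$''.
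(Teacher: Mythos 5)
First, note that the paper does not prove \cref{dingy} at all: it is quoted verbatim from Ding and Oporowski (see also \cite{wood:on,distel.wood:tree_partitions}), so there is no in-paper argument to compare against; your proposal has to stand on its own, and as written it has a genuine gap. The first half is fine and standard: taking $A_x:=\{v:\mathrm{top}(v)=x\}$ gives parts of size at most $t+1$ such that every edge of $H$ joins two parts indexed by ancestor-related nodes of $T$. But the ``key step'' you then describe --- collapsing ancestor--descendant adjacencies into parent--child ones by contracting degree-$2$ paths and merging $A_x$ with ``a controlled number of nearby $A_y$'s'' --- is exactly the entire difficulty of the theorem, and the sketch does not carry it out. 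A single vertex $v$ may have up to $\Delta$ neighbours whose tops lie in $\Delta$ \emph{different} branches strictly below $\mathrm{top}(v)$, at arbitrary depths; making $v$'s part adjacent in the new tree to all of their parts cannot be done by contracting paths of degree-$2$ nodes, and merging parts to absorb such edges cascades: the parts you merge carry their own long edges into yet other branches, and nothing in the ``passenger budget'' (which bounds the \emph{number} of long edges per vertex, not the amount of merging any single node must absorb) shows this process terminates with parts of size $O(\Delta(t+1))$. Likewise the claim that along a maximal degree-$2$ path almost every vertex of every intermediate bag is a passenger is unjustified, and in any case the hard configurations are branching ones, not paths. This is not a cosmetic issue: graphs such as $K_{2,n}$ and fans have treewidth $2$ and unbounded tree-partition-width precisely because one vertex's neighbours are spread across many branches, so any correct proof must use the degree bound structurally to control that configuration, not merely as amortized bookkeeping.

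The actual proofs (Ding--Oporowski, and the simpler/stronger versions in \cite{wood:on,distel.wood:tree_partitions}) do not perform local surgery on the $\mathrm{top}$-assignment. They build the tree-partition directly by induction on the graph, with a strengthened statement roughly of the form: if $G$ has treewidth less than $k$, maximum degree at most $\Delta$, and $S\subseteq V(G)$ is a prescribed set of bounded size, then $G$ has a tree-partition of width $O(\Delta k)$ in which $S$ is contained in the root part. One enlarges $S$ using a bag or separator of the tree decomposition, uses the degree bound to control $|N_G(S)|$, and recurses on the components of the complement, attaching their root parts as children. If you want to salvage your approach, the realistic route is to abandon the top-assignment surgery and prove such an inductive statement; that is where the constant ($24$, or better in later work) genuinely comes from.
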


\begin{cor}\label{ding_translation}
  If $G\subsetcong H\boxtimes P\boxtimes K_c$ where $H$ has treewidth $t$ and maximum degree $\Delta$ then there exists a tree $T$ such that $G\subsetcong T\boxtimes P\boxtimes K_{24c\Delta (t+1)}$.
\end{cor}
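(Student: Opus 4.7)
The plan is to combine \cref{dingy} with \cref{partitions_vs_products} by ``coarsening'' the given $H$-partition of $G$ along the tree-partition of $H$ produced by Ding and Oporowski. Since the path factor and the path-partition of $G$ will not be modified at all, the whole task reduces to replacing the $H$-partition of $G$ by a $T$-partition whose parts are only moderately larger.

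More precisely, by \cref{partitions_vs_products}, the hypothesis $G\subsetcong H\boxtimes P\boxtimes K_c$ gives an $H$-partition $\mathcal{A}:=\{A_y:y\in V(H)\}$ of $G$ and a $P$-partition $\mathcal{C}:=\{C_z:z\in V(P)\}$ of $G$ such that $|A_y\cap C_z|\le c$ for all $(y,z)\in V(H)\times V(P)$. Applying \cref{dingy} to $H$ yields a tree $T$ and a tree-partition $\{B_x:x\in V(T)\}$ of $H$ with $|B_x|\le 24\Delta(t+1)$ for every $x\in V(T)$. I would then define the candidate $T$-partition by setting
\[
  A'_x := \bigcup_{y\in B_x} A_y \quad\text{for each } x\in V(T).
\]

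The first step would be to check that $\mathcal{A}':=\{A'_x:x\in V(T)\}$ really is a $T$-partition of $G$. It is a partition of $V(G)$ because $\{B_x\}$ partitions $V(H)$ and $\{A_y\}$ partitions $V(G)$. For the adjacency condition, take an edge $vw\in E(G)$ with $v\in A'_x$ and $w\in A'_{x'}$; then there exist $y\in B_x$ and $y'\in B_{x'}$ with $v\in A_y$ and $w\in A_{y'}$. Since $\mathcal{A}$ is an $H$-partition, either $y=y'$ or $yy'\in E(H)$, and since $\{B_x\}$ is a tree-partition of $H$, both alternatives force $x=x'$ or $xx'\in E(T)$, as required.

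The second step is the width bound on $\mathcal{A}'\cap\mathcal{C}$. For any $x\in V(T)$ and any $z\in V(P)$,
\[
  |A'_x\cap C_z| \;\le\; \sum_{y\in B_x}|A_y\cap C_z| \;\le\; |B_x|\cdot c \;\le\; 24c\Delta(t+1).
\]
Applying \cref{partitions_vs_products} in the reverse direction to $\mathcal{A}'$ and $\mathcal{C}$ then yields $G\subsetcong T\boxtimes P\boxtimes K_{24c\Delta(t+1)}$. There is no real obstacle here: the only substantive ingredient is the Ding--Oporowski theorem, already quoted as \cref{dingy}, and the rest is a routine verification that coarsening a partition according to a tree-partition of its index graph preserves the product-structure bookkeeping.
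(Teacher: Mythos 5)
Your proof is correct and is essentially the paper's argument: both apply \cref{dingy} to get a tree-partition of $H$ of width $24\Delta(t+1)$ and then coarsen the $H$-partition of $G$ accordingly. The paper merely phrases the coarsening in product form, writing $H\subsetcong T\boxtimes K_{24\Delta(t+1)}$ via \cref{partitions_vs_products} and then regrouping $K_{24\Delta(t+1)}\boxtimes K_c\cong K_{24c\Delta(t+1)}$, whereas you verify the same coarsened partition directly.
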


\begin{proof}
  By \cref{dingy}, $H$ has a tree-partition $\mathcal{T}:=\{B_x:x\in V(T)\}$ of width at most $24\Delta (t+1)$. By \cref{partitions_vs_products}, $H \subsetcong T\boxtimes K_{24\Delta (t+1)}$.  Therefore, $G\subsetcong T\boxtimes K_{24\Delta (t+1)}\boxtimes P\boxtimes K_c \cong T\boxtimes P\boxtimes K_{24c\Delta (t+1)}$.
\end{proof}

The \defin{length} of a path is the number of edges in it. Given two vertices $v,w\in V(G)$, $\dist_G(v,w)$ denotes the minimum length of a path in $G$ that contains $v$ and $w$, or $\dist_G(v,w)$ is infinite if $v$ and $w$ are in different connected components of $G$. For any $R\subseteq V(G)$, the \defin{diameter} of $R$ in $G$ is $\diam_G(R):=\max\{\dist_G(v,w):v,w\in R\}$.

\begin{obs}\label{diameter_spread}
  Let $G$ be a graph, let $R\subseteq V(G)$, and let $\mathcal{L}$ be a layering of $G$.  Then there exists a layer $L\in\mathcal{L}$ such that $|R\cap L|\ge |R|/(\diam_G(R)+1)$.
\end{obs}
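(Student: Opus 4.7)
The plan is to exploit the defining property of a layering: because $\mathcal{L}$ is a path-partition, every edge of $G$ has both endpoints in the same layer or in two consecutive layers. Consequently, for any two vertices $u,v$ in the same connected component, if $u\in P_a$ and $v\in P_b$, then traversing any $u$-$v$ path changes the layer index by at most one per edge, so
\[
\dist_G(u,v) \ge |a-b|.
\]
This is the only graph-theoretic input needed.

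Given this, I would argue as follows. If $\diam_G(R)=\infty$ then the claimed inequality is vacuous (any layer $L$ will do, since the right-hand side is zero under the natural convention). So assume $D:=\diam_G(R)$ is finite, which in particular means $R$ lies in a single connected component of $G$. Let $a_{\min}$ and $a_{\max}$ be the smallest and largest indices of a vertex of $P$ for which $P_x\cap R\ne\emptyset$, and pick witnesses $u\in P_{a_{\min}}\cap R$ and $v\in P_{a_{\max}}\cap R$. Then
\[
a_{\max}-a_{\min} \;\le\; \dist_G(u,v) \;\le\; D,
\]
so $R$ is covered by at most $D+1$ layers of $\mathcal{L}$.

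Finally, apply the pigeonhole principle: partitioning $R$ among at most $D+1$ layers forces some layer $L\in\mathcal{L}$ to satisfy $|R\cap L|\ge |R|/(D+1)$, as required. There is no real obstacle here; the only subtlety is handling $\diam_G(R)=\infty$ (equivalently, $R$ meeting several components of $G$), which I dispose of at the outset so that the pigeonhole step operates on a genuinely finite range of layer indices.
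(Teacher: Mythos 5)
Your proof is correct and follows essentially the same route as the paper: both observe that, since edges of $G$ only join vertices in the same or consecutive layers, $R$ is contained in at most $\diam_G(R)+1$ consecutive layers, and then apply the Pigeonhole Principle. Your write-up merely spells out the distance-versus-layer-index inequality and the degenerate infinite-diameter case, which the paper leaves implicit.
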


\begin{proof}
  By the definition of layering, the vertices in $R$ are contained in a set of at most $\diam_G(R)+1$ consecutive layers of $\mathcal{L}$. The result then follows from the Pigeonhole Principle.
\end{proof}

We also make use of the following basic fact about tree-partitions:

\begin{obs}\label{tree_thingy}
  Let $G$ be a graph, let $\mathcal{T}:=\{B_x:x\in V(T)\}$ be a tree-partition of $G$, let $x\in V(T)$, and let $v,w\in N_G(B_x)$ be in the same component of $G-B_x$.  Then $T$ contains an edge $xy$ with $v,w\in B_y$.
\end{obs}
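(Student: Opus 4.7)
The plan is to show $v$ and $w$ lie in a common part $B_y$ with $xy \in E(T)$ by first identifying which parts contain $v$ and $w$, and then using the tree structure of $T$ together with the connecting path in $G-B_x$ to force these parts to coincide.

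First I would unpack what it means for $v \in N_G(B_x)$. By the definition of $N_G(\cdot)$ on page introducing notation, $v \notin B_x$ but $v$ has a neighbour $v' \in B_x$. Since $\mathcal{T}$ is a tree-partition, $v \in B_{y_1}$ for some $y_1 \in V(T)$ with $y_1 \neq x$, and because $vv' \in E(G)$ with $v \in B_{y_1}$ and $v' \in B_x$, the defining property of an $H$-partition forces $xy_1 \in E(T)$. Likewise $w \in B_{y_2}$ for some neighbour $y_2$ of $x$ in $T$. The goal reduces to proving $y_1 = y_2$.

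Second, since $v$ and $w$ are in the same component of $G-B_x$, there is a path $v = u_0, u_1, \ldots, u_k = w$ in $G$ with $u_i \notin B_x$ for all $i$. For each $i$, let $z_i \in V(T)$ be the index with $u_i \in B_{z_i}$; then $z_i \neq x$, and the partition property applied to the edge $u_i u_{i+1}$ gives either $z_i = z_{i+1}$ or $z_i z_{i+1} \in E(T)$. Hence $z_0, z_1, \ldots, z_k$ is a walk in $T-x$ from $z_0 = y_1$ to $z_k = y_2$.

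Third, I would invoke the fact that in a tree, distinct neighbours of a vertex $x$ lie in distinct components of $T-x$: the unique $y_1$--$y_2$ path in $T$ uses the edge $xy_1$ (or goes through $x$) if $y_1 \neq y_2$. Since $y_1$ and $y_2$ are connected in $T-x$ by the walk above, this forces $y_1 = y_2$. Setting $y := y_1 = y_2$ gives $v, w \in B_y$ with $xy \in E(T)$, as required. There is no real obstacle here; the only thing to be careful about is not conflating $B_x$ itself with its neighbourhood, and tracking that the path witnessing connectivity of $v$ and $w$ in $G-B_x$ really does translate to a walk avoiding $x$ in $T$.
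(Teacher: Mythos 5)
Your proposal is correct and follows essentially the same route as the paper: locate the parts containing $v$ and $w$, note both are adjacent to $x$ in $T$, and use the path in $G-B_x$ to obtain a walk in $T$ avoiding $x$, which by the tree structure forces the two parts to coincide (the paper phrases this as a cycle contradiction rather than via components of $T-x$, but the argument is the same).
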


\begin{proof}
  Suppose that $v\in B_y$ and $w\in B_z$ for some $y,z\in V(T)$.  Since $v,w\in N_G(B_x)$, $T$ contains the edges $xy$ and $xz$.  All that remains is to show that $y=z$. For the purpose of contradiction, assume $y\neq z$.  Since $v$ and $w$ are in the same component of $G-B_x$, $G$ contains a path from $v$ to $w$ that avoids all vertices in $B_x$, which implies that $T$ contains a path $P_{yz}$ from $y$ to $z$ that does not include $x$.  This is a contradiction since then $P_{yz}$ and the edges $xy$ and $yz$ form a cycle in $T$, but $T$ is a tree.
\end{proof}

\subsection{Percolation in Binary Trees}

The \defin{depth} of a vertex $v$ in a rooted tree $T$ is the length of the path $A_T(v)$ from $v$ to the root of $T$.  Each vertex $a\in V(A_T(v))$ is an \defin{ancestor} of $v$, and $v$ is a \defin{descendant} of each vertex in $V(A_T(v))$.  We say that a set $B\subseteq V(T)$ is \defin{unrelated} if no vertex of $B$ is an ancestor of any other vertex in $B$.

For each $h\in\N$, let $T_h$ denote the complete binary tree of height $h$; that is, the rooted ordered tree with $2^h$ leaves, each having depth $h$ and in which each non-leaf vertex has exactly two children, one \defin{left child} and one \defin{right child}.  Note that the ordering of $T_h$ induces an ordering on every unrelated set $B\subseteq V(T_h)$, which we refer to as the \defin{left-to-right ordering}.  Specifically, $v\in B$ appears before $w\in B$ in the left-to-right ordering of $B$ if and only if there exists a common ancestor $a$ of both $v$ and $w$ such that the path from $a$ to $v$ contains the left child of $a$ and the path from $a$ to $w$ contains the right child of $a$.

We use the following two percolation-type results for $T_h$.

\begin{lem}\label{one_path}
  Let $h\ge 1$, let $r$ be the root of $T_h$, and
  let $S\subseteq V(T_h)$ with $1\le |S|< 2^h$. Then there exists a vertex $v$ of $T_h$ such that
  \begin{compactenum}[(i)]
    \item the depth of $v$ is at most $\log_2|S|+1$;
    \item $v\neq r$ and the parent of $v$ is in $S\cup\{r\}$; and
    \item $T_h-S$ contains a path from $v$ to a leaf of $T_h$.
  \end{compactenum}
\end{lem}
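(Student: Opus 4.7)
The plan is to prove the lemma by induction on $h$. The base case $h=1$ forces $|S|=1$, and one checks directly that taking $v$ to be a child of $r$ not lying in $S$ satisfies all three properties (if $r\in S$ either child works; otherwise pick the child distinct from the singleton $S$).

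For the inductive step $h\ge 2$, let $c_L,c_R$ be the children of $r$ and set $S^L:=S\cap V(T_{c_L})$ and $S^R:=S\cap V(T_{c_R})$, so $|S^L|+|S^R|\le |S|$. If one side is empty, say $|S^L|=0$, then $T_{c_L}$ is $S$-free, and $v:=c_L$ satisfies everything: depth $1\le \log_2|S|+1$, parent $r\in S\cup\{r\}$, and any path from $c_L$ to a leaf of $T_{c_L}$ lies in $T_h-S$. Otherwise both $|S^L|,|S^R|\ge 1$; assume without loss of generality $|S^L|\le |S^R|$, so $|S^L|\le |S|/2<2^{h-1}$. Apply the induction hypothesis to $T_{c_L}\cong T_{h-1}$ with the set $S^L$ to obtain a vertex $v'\in V(T_{c_L})\setminus\{c_L\}$ satisfying the three properties relative to $T_{c_L}$.

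Now lift back to $T_h$. The depth of $v'$ in $T_h$ is one more than its depth in $T_{c_L}$, hence at most $\log_2|S^L|+2\le \log_2|S|+1$, giving (i). For (iii), since $T_h-S$ restricted to $T_{c_L}$ equals $T_{c_L}-S^L$ and leaves of $T_{c_L}$ are leaves of $T_h$, the IH-path transfers directly. The main obstacle is property (ii): the IH only guarantees that the parent of $v'$ (in $T_{c_L}$) lies in $S^L\cup\{c_L\}$, which need not imply a parent in $S\cup\{r\}$ inside $T_h$. If that parent lies in $S^L$, or equals $c_L$ with $c_L\in S$, then $v:=v'$ works. The delicate sub-case is when the parent equals $c_L$ but $c_L\notin S$; here we apply a \emph{replacement trick} and take $v:=c_L$ instead. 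Then the parent of $v$ is $r\in S\cup\{r\}$, the depth is $1$, and since $c_L\notin S$, the edge $c_L v'$ extends the IH-path from $v'$ to a leaf of $T_{c_L}$ into a valid $S$-avoiding path from $c_L$ in $T_h$. This handles the only obstacle and completes the induction.
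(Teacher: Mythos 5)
Your proof is correct. It is the same kind of induction on $h$ as the paper's, with the same endgame (the ``replacement trick'' of promoting the subtree root to be $v$ when the recursively obtained vertex has its parent equal to that root and the root avoids $S$), but the decomposition step is genuinely different. The paper does not split at the root: it takes $\ell$ to be the deepest level entirely contained in $S\cup\{r\}$, pigeonholes among the $2^{\ell+1}$ subtrees rooted at depth $\ell+1$ to find one meeting $S$ in at most $|S|/2^{\ell+1}$ vertices, and makes a single recursive call that descends $\ell+1$ levels at once; the full-level property of depth $\ell$ is what guarantees condition (ii) for the promoted root in that version. You instead descend one level per step, recursing into the child subtree carrying at most half of $S$ (after disposing of the case where one side is $S$-free), so the depth increases by $1$ exactly when $\log_2|S|$ drops by at least $1$, which yields the same bound $\log_2|S|+1$; here condition (ii) for the promoted vertex $c_L$ is immediate because its parent is $r$. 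Your route is a bit more elementary --- no maximal covered level, no pigeonhole over a whole level --- and your case analysis (empty side; parent in $S^L$; parent $=c_L\in S$; parent $=c_L\notin S$) is complete, so nothing is lost relative to the paper's argument.
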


\begin{proof}
  The proof is by induction on $h$.  When $h=1$, $|S|\le 1$. In particular, at least one child $v$ of $r$ is not in $S$.  The depth of $v$ is $1\le \log_2|S|+1$, so $v$ satisfies (i).  The parent of $v$ is $r\in S\cup\{r\}$, so $v$ satisfies (ii).  $T_1-S$ contains a length-$0$ path from $v$ to itself (a leaf of $T_1$), so $v$ satisfies (iii).

  For $h\ge 2$, let $\ell$ be the maximum integer such that $S\cup\{r\}$ contains all $2^\ell$ vertices of depth $\ell$.  Observe that $2^\ell \le |S|$, so $\ell \le \log_2 |S| < h$.  Let $L$ be the set of $2^{\ell+1}$ depth-$(\ell+1)$ vertices in $T_h$.  By the Pigeonhole Principle some vertex $r'\in L$ is the root of a complete binary tree $T'$ with root $r'$ of height $h-\ell-1$ with $|S\cap V(T')| \le |S|/2^{\ell+1} < 2^{h-\ell-1}$.

  If $V(T')\cap S=\emptyset$ then choosing $v:=r'$ satisfies the requirements of the lemma.  Otherwise, by applying induction on $T'$ and $S':=S\cap V(T')$ we obtain a vertex $v'$ of depth at most $\ell+1+\log_2(|S'|)+1 \le \log_2|S|+1$ whose parent is in $S\cup\{r'\}$, and such that $T_h-S$ contains a path from $v'$ to a leaf of $T_h$.  Thus $v'$ satisfies requirements (i) and (iii).  If the parent of $v'$ is in $S$ then $v'$ also satisfies requirement (ii) and the lemma is proven, with $v:=v'$.  Otherwise, the parent of $v'$ is $r'$, in which case $r'$ satisfies requirements (i)--(iii) and we are done, with $v:=r'$.
\end{proof}

\begin{lem}\label{two_paths}
  Let $h\ge 1$, let $r$ be the root of $T_h$, and
  and let $S\subseteq V(T_h)$ with $1\le |S|< 2^{h-1}$. Then there exist two unrelated vertices $v_1$ and $v_2$ of $T_h$ such that, for each $i\in\{1,2\}$:
  \begin{compactenum}[(i)]
    \item the depth of $v_i$ is at most $\log_2|S|+2$;
    \item $v_i\neq r$ and the parent of $v_i$ is in $S\cup\{r\}$; and
    \item $T_h-S$ contains a path from $v_i$ to a leaf of $T_h$.
  \end{compactenum}
\end{lem}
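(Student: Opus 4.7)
The plan is to mirror the inductive strategy from the proof of \cref{one_path} but make a single split into two unrelated subtrees near the top, after which \cref{one_path} is applied separately inside each. As in \cref{one_path}, I would let $\ell$ be the largest integer such that every depth-$\ell$ vertex lies in $S\cup\{r\}$; this yields $\ell\le\log_2|S|$, and combined with $|S|<2^{h-1}$ forces $\ell+1\le h-1$. By maximality of $\ell$, every depth-$(\ell+1)$ vertex has its parent (at depth $\ell$) in $S\cup\{r\}$, which is what makes these vertices good candidates for the role of ``just below'' the boundary of $S$.

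Next I would select two unrelated vertices $u_1,u_2$ at depth $\ell+1$ whose subtrees each meet $S$ in few vertices. An averaging argument shows that at most $2^\ell$ of the $2^{\ell+1}$ depth-$(\ell+1)$ vertices $u$ can satisfy $|S\cap V(T_u)|>|S|/2^\ell$, leaving at least $2^\ell\ge 2$ good vertices when $\ell\ge 1$; when $\ell=0$ the bound $|S_u|\le|S|$ is vacuous and I simply take both depth-$1$ vertices. The chosen $u_1,u_2$ are unrelated since they share a depth, and $|S_i|:=|S\cap V(T_{u_i})|\le|S|/2^\ell<2^{h-\ell-1}$ is exactly what is needed to invoke \cref{one_path} inside $T_{u_i}$ (of height $h-\ell-1$) whenever $S_i\neq\emptyset$; if $S_i=\emptyset$ I take $v_i:=u_i$ directly.

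Applying \cref{one_path} to $(T_{u_i},S_i)$ yields a vertex $v_i'$ of depth at most $\log_2|S_i|+1$ in $T_{u_i}$, hence depth at most $(\ell+1)+\log_2(|S|/2^\ell)+1=\log_2|S|+2$ in $T_h$, together with a path from $v_i'$ to a leaf in $T_h-S$. I would set $v_i:=v_i'$ whenever the parent of $v_i'$ lies in $S$; otherwise that parent must be $u_i\notin S$, and I fall back to $v_i:=u_i$, concatenating the edge $u_iv_i'$ with the path from $v_i'$ to a leaf to witness condition~(iii). Since $v_i\in V(T_{u_i})$ in every case, the pair $v_1,v_2$ inherits the unrelatedness of $u_1,u_2$.

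The trickiest ingredient, and the main obstacle in my view, is condition~(ii): \cref{one_path} only guarantees the parent of $v_i'$ to lie in $S_i\cup\{u_i\}$, and the local root $u_i$ need not itself belong to $S$. The fallback $v_i:=u_i$ described above resolves this because, by maximality of $\ell$, the parent of $u_i$ at depth $\ell$ always lies in $S\cup\{r\}$; the same observation keeps the boundary case $\ell=0$ clean, since then the parent of $u_i$ is simply~$r$.
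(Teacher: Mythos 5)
Your proof is correct, and its skeleton is the same as the paper's: split $T_h$ into two disjoint (hence unrelated) subtrees, invoke \cref{one_path} in each, and when the parent of the returned vertex is the local root $u_i$ rather than a vertex of $S$, fall back to $v_i:=u_i$, whose parent lies in $S\cup\{r\}$ and which reaches a leaf through $v_i'$. All the details check out: the averaging over the $2^{\ell+1}$ depth-$(\ell+1)$ subtrees does leave at least $2^\ell\ge 2$ subtrees $T_{u_i}$ with $|S\cap V(T_{u_i})|\le |S|/2^\ell<2^{h-\ell-1}$, the depth accounting $(\ell+1)+\log_2(|S|/2^\ell)+1=\log_2|S|+2$ is right, and $u_i\notin S$ in the fallback case is argued correctly. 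The only difference from the paper is that your first layer (the maximal fully-covered depth $\ell$ plus the averaging argument, i.e.\ a replay of the inductive step inside \cref{one_path}) is unnecessary: since $|S|<2^{h-1}$, the hypothesis of \cref{one_path} already holds for the two height-$(h-1)$ subtrees rooted at the children $r_1,r_2$ of $r$, so the paper simply takes $u_i:=r_i$, getting depth at most $1+(\log_2|S|+1)=\log_2|S|+2$ directly, with the same $S_i=\emptyset$ and fallback cases you describe. Your extra descent buys nothing here (the slack for it is already built into the $+2$ of the stated bound), but it does no harm.
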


\begin{proof}
  Let $T_1$ and $T_2$ be the two maximal subtrees of $T_h$ rooted at the children $r_1$ and $r_2$, respectively of $r$. (Each of $T_1$ and $T_2$ is a complete binary tree of height $h-1$.)  For each $i\in\{1,2\}$, let $S_i:=S\cap V(T_i)$.  If $S_i=\emptyset$ then we choose $v_i=r_i$ and this satisfies requirements (i)--(iii).  If $S_i\neq\emptyset$ then, since $|S_i|\le |S|< 2^{h-1}$, we can apply \cref{one_path} to $T_i$ and $S_i$ to obtain a vertex $v_i'\in V(T_i)$ of depth at most $1+\log_2|S_i|+1 \le \log_2 |S| + 2$ and such that $T_h-S$ contains a path from $v_i'$ to a leaf of $T_h$.  Therefore, $v_i'$ satisfies (i) and (iii).  Furthermore, the parent of $v_i'$ is in $S\cup\{r_i\}$.  If the parent of $v_i'$ is in $S$, then $v_i'$ also satisfies (ii), so we set $v_i:=v_i'$.  If the parent of $v_i'$ is not in $S$, then the parent of $v_i'$ is $r_i\not\in S$ and $r_i$ satisfies (i)--(iii), so we set $v_i:=r_i$.  Finally, since $v_1\in V(T_1)$ and $v_2\in V(T_2)$, $v_1$ and $v_2$ are unrelated.
\end{proof}

\subsection{A Connectivity Lemma}

The \defin{$x\times y$ grid} $G_{x\times y}$ is the graph with vertex-set $V(G_{x\times y}):=\{1,\ldots,x\}\times\{1,\ldots,y\}$ and that contains an edge with endpoints $(x_1,y_1)$ and $(x_2,y_2)$ if and only if $|x_1-x_2|+|y_1-y_2|=1$.  An edge of $G_{x\times y}$ is \defin{horizontal} if its two endpoints agree in the second (y) coordinate.  For each $i\in\{1,\ldots,x\}$, the vertex-set $\{i\}\times\{1,\ldots,y\}$ is called \defin{column $i$} of $G_{x\times y}$.  A set $C$ of columns is \defin{consecutive} if $G_{x\times y}[\cup C]$ is connected.

\begin{lem}\label{grid_connectivity}
  Let $x,y,p\ge 1$ be integers, let $G$ be a graph obtained by subdividing horizontal edges of $G_{x\times y}$, and let $S\subseteq V(G)\setminus V(G_{x\times y})$ be a set of subdivision vertices of size $|S|< py$.  Then some component of $G-S$ contains at least $x/p$ consecutive columns of $G_{x\times y}$.
\end{lem}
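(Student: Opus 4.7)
The plan is to view the subdivision vertices as the only way to sever horizontal connections between adjacent columns, and then argue by a cost analysis that $S$ is too small to break the column structure more than $p-1$ times.

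Call a column boundary, i.e.\ a pair of consecutive columns $\{i\}\times\{1,\dots,y\}$ and $\{i+1\}\times\{1,\dots,y\}$, \emph{blocked} if every $(i,j)$--$(i+1,j)$ path in $G$ meets $S$. First I would observe that if a boundary is not blocked, then $(i,j)$ and $(i+1,j)$ lie in the same component of $G-S$ for some row $j$, and since vertical edges are not subdivided and original grid vertices are never removed (as $S$ consists of subdivision vertices only), every vertex of column $i$ and every vertex of column $i+1$ is also in that same component. Thus a non-blocked boundary guarantees that the two adjacent columns are in the same component of $G-S$.

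Next, I would quantify the cost of blocking a boundary. For the boundary between columns $i$ and $i+1$ to be blocked, the horizontal edge of $G_{x\times y}$ at every row $j\in\{1,\dots,y\}$ must be subdivided (otherwise the direct edge lies in $G-S$, since its endpoints are in $V(G_{x\times y})$ and hence outside $S$), and the resulting subdivided path at row $j$ must contain at least one vertex of $S$. Each such vertex belongs uniquely to the horizontal edge between columns $i$ and $i+1$, so different blocked boundaries consume disjoint sets of subdivision vertices from $S$, and each blocked boundary consumes at least $y$ of them.

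Putting these together, the number of blocked boundaries is at most $\lfloor |S|/y\rfloor\le p-1$. The $x$ columns are therefore partitioned by these blocked boundaries into at most $p$ maximal runs of consecutive columns, within each of which every pair of adjacent columns is connected in $G-S$, so each run lies entirely in one component of $G-S$. By the pigeonhole principle some run contains at least $x/p$ columns, which is exactly the desired consecutive set. The only routine step that might need care is the base case where no boundary is blocked (handled trivially by the single run of $x$ columns) and the book-keeping to check that a run of consecutive columns is itself a connected subgraph of $G[\cup C]$; I do not foresee a serious obstacle, since connectivity inside a run follows from the non-blocked boundary observation together with the untouched vertical edges.
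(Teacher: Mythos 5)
Your proposal is correct and follows essentially the same argument as the paper: each separated pair of adjacent columns forces at least $y$ distinct subdivision vertices into $S$, so fewer than $p$ boundaries can be broken, and the pigeonhole principle yields a run of at least $x/p$ consecutive columns lying in one component. Your extra book-keeping (unsubdivided vertical edges, grid vertices never in $S$) just makes explicit what the paper leaves implicit.
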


\begin{proof}
  For each $i\in\{1,\ldots,x-1\}$, in order to separate column $i$ from column $i+1$, $S$ must contain at least $y$ subdivision vertices on the horizontal edges between columns $i$ and $i+1$.  Since $|S|< py$, this implies that there are at most $p-1$ values of $i\in\{1,\ldots,x-1\}$ for which columns $i$ and $i+1$ are in different components of $G-S$. These at most $p-1$ values of $i$ partition $\{1,\ldots,x\}$ into at most $p$ intervals, at least one of which contains at least $x/p$ consecutive columns that are contained in a single component of $G-S$.
\end{proof}

\subsection{The Proof}

Recall that, for each $h\in\N$, $G_{h}$ is the planar supergraph of the complete binary tree $T_h$ of height $h$ obtained by adding the edges of a path $D_i$ that contains all vertices of depth $i$, in left-to-right order, for each $i\in\{1,\ldots,h\}$.   Since $T_h$ is a spanning subgraph of $G_h$, the \defin{depth} of a vertex $v$ in $G_h$ refers to the depth of $v$ in $T_h$.  The \defin{height} of a depth-$d$ vertex of $T_h$ is $h-d$.
We are now ready to prove the following result that, combined with \cref{ding_translation} is sufficient to prove \cref{main_thm}:

\begin{thm}\label{main_thm_tree}
  For every $h\in\N$, every tree $T$, and every path $P$, if $G_{h}\subsetcong T\boxtimes P\boxtimes K_c$ then $c\ge 2^{\Omega(\sqrt{\log h})}$.
\end{thm}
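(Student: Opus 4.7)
The plan is a proof by contradiction via an iterative construction. Assume $G_h\subsetcong T\boxtimes P\boxtimes K_c$ and, by \cref{partitions_vs_products}, fix a tree-partition $\{B_x:x\in V(T)\}$ of $G_h$ and a layering $\mathcal{L}$ with $|B_x\cap L|\le c$ for every $x$ and every $L\in\mathcal{L}$. Two structural features of $G_h$ drive the argument. First, because the $G_h$-distance from the root to any depth-$d$ vertex is exactly $d$, the $2^d$ vertices of $D_d$ have pairwise $G_h$-distance at most $2d$, so \cref{diameter_spread} yields some layer containing at least $2^d/(2d+1)$ of them. Second, for any depth range $[d,d+k]$, the subgraph of $G_h$ induced on the leftmost depth-$(d+j)$ descendants of the depth-$d$ vertices, with horizontal connections supplied by intervening $D_{d+j}$-segments, is a subdivision of the $(k+1)\times 2^d$ grid where row $j$ is subdivided $2^j-1$ times per horizontal edge; this matches the hypothesis of \cref{grid_connectivity} exactly.

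The argument iteratively identifies a nested sequence $T_h=S_0\supsetneq S_1\supsetneq\cdots\supsetneq S_K$ of complete binary subtrees of $T_h$, of heights $h_{k+1}\ge h_k-O(\log c)$, together with a sequence of ``witness'' bags $B^*_0,\ldots,B^*_{K-1}$ and a family $\mathcal{F}_k$ of $2^k$ pairwise-unrelated subtrees that refines through the rounds. The quantitative handle is that $|B^*_k\cap V(S_k)|=O(c\log c)$, because the top $O(\log c)$ depths of $S_k$ are confined to $O(\log c)$ consecutive layers (by the root-distance fact) and each bag-layer intersection has size at most $c$. At step $k$, provided this is less than $2^{h_k-1}$, I would apply \cref{two_paths} to $S_k$ with $B^*_k\cap V(S_k)$ to produce two unrelated vertices $v_1,v_2$ of depth $O(\log c)$ in $S_k$, whose $T_h$-subtrees survive in $T_h-B^*_k$. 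I would then apply \cref{grid_connectivity} to the subdivided grid inside $S_k$ at the relevant depth band to control whether the two branches reconnect in $G_h-B^*_k$ through a horizontal $D_d$-shortcut, and \cref{tree_thingy} to carry the tree-partition and layering data forward to $S_{k+1}$ and $\mathcal{F}_{k+1}$.

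After $K=\Theta(\sqrt{\log h})$ rounds, $\mathcal{F}_K$ contains $2^K$ pairwise-unrelated subtree roots, each at depth $O(K\log c)$ in $T_h$, each having its parent in one of the $K$ bags $B^*_0,\ldots,B^*_{K-1}$, and each lying in one of only $O(K\log c)$ possible layers. Pigeonholing these $2^K$ vertices against the at most $O(K^2\log c)$ candidate bag-layer intersections forces some $|B^*_j\cap L|\ge 2^K/O(K^2\log c)$, which combined with $|B^*_j\cap L|\le c$ gives $c\ge 2^{\Omega(K)}=2^{\Omega(\sqrt{\log h})}$. The main obstacle is the coupled use of \cref{two_paths} and \cref{grid_connectivity}: in the pure tree $T_h$ the vertices $v_1,v_2$ are always separated by $B^*_k$, but in $G_h$ they could be reunited through a long surviving horizontal $D_d$-segment avoiding $B^*_k$, and since $|B^*_k|$ is only $O(c\log c)$ one cannot trivially block all such segments; managing this via the exact surviving-length guarantee of \cref{grid_connectivity} is the substantive combinatorial content of the argument. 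The $\sqrt{\log h}$ exponent itself emerges by balancing the constraint $K\cdot O(\log c)\le h$ (total depth budget of the iteration) against $c\ge 2^{\Omega(K)}$ (pigeonhole output), which together force $K^2=\Theta(\log h)$.
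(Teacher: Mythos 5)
Your overall shape---build, by repeated use of \cref{two_paths}, an exponentially large unrelated set of shallow branch vertices whose attachment data is concentrated in few bag--layer cells, then pigeonhole against $|B_x\cap L|\le c$---is the same as the paper's. But the plan is missing the mechanism that actually drives the bound, and its quantitative accounting does not survive scrutiny. The paper's iteration maintains the invariant that the \emph{entire current family lies in a single bag} ($\cup\mathcal{R}_i\subseteq B_{x_i}$). After splitting each branch with \cref{two_paths} relative to $B_{x_i}$, the new vertices have parents in $B_{x_i}$ but may fall into \emph{different} components of $G_h-B_{x_i}$, hence into different neighbouring bags; \cref{tree_thingy} only collects those in a common component into one bag. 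Restoring the single-bag invariant is exactly what \cref{grid_connectivity} is for (note: the danger is the branches being \emph{separated} by the bag, not ``reunited'' through a $D_d$-segment as you suggest---connectivity is what you want), and it costs a factor $\Theta(c)$ of the branches per round. That loss is why the paper needs the startup step (\cref{startup}, via \cref{small_depth_separator}) to supply $\approx h/(25c)$ initial branches inside one bag, and it is why the number of rounds is capped at roughly $\log_{c}h\approx\sqrt{\log h}$ when $c\approx 2^{\sqrt{\log h}}$. Your scheme has neither ingredient: it assumes lossless doubling ($|\mathcal{F}_k|=2^k$) with a single witness bag per round, which cannot be maintained---if instead you let each branch use whichever bag contains its parent, the number of witness bags can blow up to $2^K$ and your final pigeonhole over ``$K$ bags $\times$ $O(K\log c)$ layers'' collapses. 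Relatedly, \cref{two_paths} only guarantees the parent lies in $S\cup\{r\}$; without the paper's stronger invariant (the current vertices themselves sit in the bag), the fallback case ``parent $=r$'' breaks your claim that every root's parent lies in one of the designated bags.

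Two further concrete problems. First, the key estimate $|B^*_k\cap V(S_k)|=O(c\log c)$ is unjustified: your argument only bounds the intersection with the top $O(\log c)$ depths of $S_k$, while a single bag can meet $V(S_k)$ in $\Theta(c\,h_k)$ vertices (up to $c$ per layer over up to $2h_k+1$ layers); feeding only the top part of the bag into \cref{two_paths} would void the claim that the resulting subtrees survive in $T_h-B^*_k$, and feeding the whole bag gives depth increments $O(\log(ch))$, not $O(\log c)$ (harmless, but it shows the accounting is off). Second, the stated balancing does not produce the exponent: the constraints ``$K\cdot O(\log c)\le h$'' and ``$c\ge 2^{\Omega(K)}$'' alone would permit $K=\Theta(h/\log h)$ rounds and hence an absurdly strong bound; the true bottleneck is the per-round factor-$c$ loss against the initial supply of $\approx h/c$ branches, which is precisely the content your proposal omits. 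As written, the proof does not go through without importing the paper's startup lemma and the grid-connectivity/single-bag bookkeeping, together with the corrected round count.
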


It is worth noting that, unlike \cref{main_thm}, there is no restriction on the maximum degree of the tree $T$.

Before diving into technical details, we first sketch our strategy for proving \cref{main_thm_tree}.  We may assume that $c \le 2^{\sqrt{\log_2 h}}$ since, otherwise there is nothing to prove.  Recall \cref{partitions_vs_products}, which states that if $G_h\subsetcong T\boxtimes P\boxtimes K_c$ then $G_h$ has a tree-partition $\mathcal{T}:=\{B_x:x\in V(T)\}$ and a path-partition (that is, layering) $\mathcal{P}:=\{P_y:y\in V(P)\}$ with $|B_x\cap P_y|\le c$ for each $(x,y)\in V(T)\times V(P)$. However, since $G_h$ has diameter $2h$, \cref{diameter_spread} (with $R=B_x$) implies that $|B_x|\le c(2h+1)$ for each $x\in V(T)$. This upper bound on $|B_x|$ is used to establish all of the results described in the following paragraph.

\begin{figure}
  \begin{center}
    \includegraphics[width=.95\textwidth]{figs/sketch}
  \end{center}
  \caption{The proof of \cref{main_thm_tree}.}
  \label{sketch}
\end{figure}

Refer to \cref{sketch}.  We will construct a sequence of sets $\mathcal{R}_1,\ldots,\mathcal{R}_{t+1}$ and a sequence of nodes $x_1,\ldots,x_{t+1}$ of $T$, where each $\mathcal{R}_i$ is a family of unrelated sets in $T_h$ such that $\cup\mathcal{R}_i\subseteq B_{x_i}$. The first family $\mathcal{R}_1$ consists of $q_1\ge h/(25c)$ singleton sets whose union is an unrelated set in $T_h$. For each $i\in\{2,\ldots,t+1\}$, $\mathcal{R}_i$ has size $q_i\ge q_1/(10c)^{i-1}-3$. For each $\mathcal{R}_i:=\{R_{i,1},\ldots,R_{i,q_i}\}$, each $R_{i,j}\subseteq V(T)$ is an unrelated set in $T_h$ of size $2^{i-1}$ that has a common ancestor $a_{i,j}$ of height at least $h/5$ that is at distance at most $(i-1)(\log_2(ch)+2)$ from every element in $R_{i,j}$. Furthermore, $\{a_{i,1},\ldots,a_{i,q_{i}}\}$ is an unrelated set. These properties imply that $\cup\mathcal{R}_i$ is also an unrelated set.

We do this for some appropriately chosen integer $t\in\Theta(\sqrt{\log h})$ in order to ensure that $q_{t+1}\ge 1$, so $\mathcal{R}_{t+1}$ contains at least one part $R$ of size $2^t$.
By \cref{diameter_spread}, there exists some $y \in V(P)$ such that
\[
  |R\cap P_y| \ge \frac{|R|}{\diam_{G_h}(R)+1} \ge \frac{2^t}{2t(\log_2(ch)+2)+1}
  = 2^{t-\log_2(t\log_2(ch))-O(1)} =  2^{\Omega(t)} = 2^{\Omega(\sqrt{\log_2 h})} \enspace .
\]
Since $R\subseteq B_{x_{t+1}}$, $|B_{x_{t+1}}\cap P_y|\ge 2^{\Omega(\sqrt{\log h})}$. Since $c\ge \max\{|B_x\cap P_y|:(x,y)\in V(T)\times V(P)\}$, the assumption that $c\le 2^{\sqrt{\log h}}$ therefore leads to the conclusion that $c\ge 2^{\Omega(\sqrt{\log h})}$, which establishes \cref{main_thm}.

We now proceed with the details of the proof outlined above.  The next two lemmas will be used to obtain the set $\mathcal{R}_1$ that allows us to start the argument.  Informally, the first lemma says that every balanced separator $S$ of $G_h$ must contain a vertex of depth $i$ for each $i\in\{i_0,\ldots,h\}$, where $i_0\in O(\log|S|)$.

\begin{lem}\label{small_depth_separator}
  Let $h\in\N$ with $h\geq 1$, let $S\subseteq V(G_h)$, $S\neq\emptyset$.  If $G_h-S$ has no component with more than $|V(G_h)|/2$ vertices then $S\cap V(D_i)\neq\emptyset$ for each $i\in\{i_0,\ldots,h\}$, where $i_0:=\ceil{\max\{2\log_2|S|+2, \log_2(1+(h+2)|S|)-1\}}$.
\end{lem}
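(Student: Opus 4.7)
The plan is to prove the contrapositive: assuming $S\cap V(D_i)=\emptyset$ for some $i\in\{i_0,\ldots,h\}$, I will exhibit a component of $G_h-S$ containing more than $|V(G_h)|/2 = 2^h-\tfrac{1}{2}$ vertices. Since $D_i$ is a path disjoint from $S$, all $2^i$ depth-$i$ vertices lie in a common component $C$ of $G_h-S$. The primary lower bound on $|C|$ comes from the subtrees hanging off the depth-$i$ vertices: for each depth-$i$ vertex $v$, let $T_v$ be the subtree of $T_h$ rooted at $v$, and let $X_v$ be the component of $v$ in $T_v-S$. Every vertex of $X_v$ lies in $C$. Each $s\in S$ with $\depth(s)>i$ disconnects at most $|T_s|\le 2^{h-i}-1$ vertices (itself together with its descendants) from the unique $X_v$ whose root is the depth-$i$ ancestor of $s$, so summing over $v$ gives
\[
 |C|\ \ge\ \sum_v|X_v|\ \ge\ \sum_v|T_v|\ -\ \sum_{s\in S,\,\depth(s)>i}|T_s|\ \ge\ (2^{h+1}-2^i)\ -\ |S|\bigl(2^{h-i}-1\bigr).
\]

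To push the bound further I would also count vertices at depths $<i$ that lie in $C$. A depth-$k$ vertex $u\notin S$ with $k\le i-2$ is in $C$ unless every descending $T_h$-path from $u$ to a depth-$i$ descendant meets $S$, which by a Menger-type argument requires at least two elements of $S$ in the subtree of $u$ restricted to depths in $(k,i)$. Since different depth-$k$ vertices contribute vertex-disjoint such subtrees, at most $|S|/2$ depth-$k$ vertices are blocked, and summing over $k$ adds only an $O(i|S|)$ deficit. Establishing the lemma then reduces to verifying an inequality of the form $|S|\bigl(2^{h-i}+\Theta(i)\bigr)<2^h$. Condition~(i), $i\ge 2\log_2|S|+2$, rearranged as $|S|\le 2^{(i-2)/2}$, controls the $|S|\cdot 2^{h-i}$ contribution coming from deep subtree removals. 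Condition~(ii), $i\ge\log_2(1+(h+2)|S|)-1$, rearranged as $(h+2)|S|<2^{i+1}$, controls the $|S|\cdot i$ overhead from depths $<i$ and is the binding constraint when $h$ is much larger than $i$.

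The main technical obstacle I anticipate is the bookkeeping in the depths-$<i$ argument: since a single $s\in S$ at depth $j$ can appear in the blocking separator of an ancestor at every depth $k<j$, one needs a careful charging scheme to ensure that the total blocked count summed over all $k$ is $O(i|S|)$ rather than $O(i^2|S|)$. The boundary case $i=h$ is easiest and can be handled directly, since $|C|\ge |D_h|=2^h$ and the main inequality degenerates there (the factor $2^{h-i}-1$ vanishes).
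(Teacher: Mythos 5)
Your contrapositive argument is correct and takes a genuinely different route from the paper, and in fact its first half already does all the work. If $S\cap V(D_i)=\emptyset$ (note $i\ge i_0\ge 2$, so $D_i$ exists), the path $D_i$ places all $2^i$ depth-$i$ vertices in one component $C$, and your subtree estimate $|C|\ge 2^{h+1}-2^i-|S|(2^{h-i}-1)$ is valid. Condition (i) alone then finishes: $i\ge 2\log_2|S|+2$ gives $|S|\le 2^{(i-2)/2}$, hence $|S|\,2^{h-i}\le 2^{h-i/2-1}\le 2^{h-2}$, and for $i\le h-1$ also $2^i\le 2^{h-1}$, so $|C|\ge 2^{h+1}-2^{h-1}-2^{h-2}=5\cdot 2^{h-2}>2^h>|V(G_h)|/2$, while $i=h$ you dispose of directly via $|C|\ge 2^h$. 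Consequently the whole second stage (counting depths $<i$, the Menger-type cut argument, and the charging worry you flag) is unnecessary, and your remark that condition (ii) is ``the binding constraint when $h\gg i$'' is not right: throughout the range $i\ge 2\log_2|S|+2$ the two deficit terms $2^i$ and $|S|2^{h-i}$ are each at most a constant fraction of $2^{h+1}$. In fact your argument proves a slightly stronger statement in which $i_0$ may be taken to be $\lceil 2\log_2|S|+2\rceil$, which is harmless (indeed helpful) where the lemma is used in \cref{startup}. The paper argues in the opposite direction: it fixes the component $C$ of $G_h-S$ maximizing $|C\cap V(D_h)|$ and runs level-by-level recurrences both upward ($|C_i|\ge|C_{i+1}|/2-|S_i|$) and downward ($|C_{i+1}|\ge 2|C_i|-|S_{i+1}|$) to show that $C\cap V(D_i)$ is non-empty (the source of the $2\log_2|S|+2$ term) yet not all of $D_i$ (the source of the $\log_2(1+(h+2)|S|)-1$ term), so that $S$ must contain a boundary vertex on the path $D_i$. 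Your route replaces this bidirectional counting with a single global volume bound, which is shorter and avoids the second term of the max; the paper's version, on the other hand, explains exactly why its two-term threshold $i_0$ has the form it does.
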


\begin{proof}
  Let $C$ be the vertex set of a component of $G_h-S$ that maximizes $C\cap V(D_h)$. For each $i\in\{0,\ldots,h\}$, let $C_i:=C\cap V(D_i)$ and let $S_i:=S\cap V(D_i)$. We will show that, for each $i\ge i_0$, $C_i$ is non-empty but does not contain all $2^i$ vertices in $D_i$.  Therefore $S_i\supseteq N_{G_h}(C_i)\cap V(D_i)\neq\emptyset$ for each $i\in\{i_0,\ldots,h\}$.

  For each $i\in\{0,\ldots,h-1\}$, the vertices in $C_{i+1}$ are adjacent to at least $|C_{i+1}|/2$ vertices of $D_{i}$, so $|C_i|\ge |N_{G_h}(C_{i+1})\cap V(D_i)\setminus S_i| \ge |C_{i+1}|/2 - |S_i|$.  Iterating this inequality $h-i$ times gives $|C_i|\ge |C_h|/2^{h-i}-\sum_{j=i}^{h-1}|S_j|/2^{h-i-1}\ge |C_h|/2^{h-i}-|S|$.  The vertices in $S$ partition $V(D_h)\setminus S$ into at most $|S|+1$ connected components.  Since $C$ is chosen to maximize $|C_h|$, $|C_h| \ge (2^h-|S|)/(|S|+1) > 2^{h}/(|S|+1) - 1$.  Therefore,
  \begin{equation}
    |C_i|\ge \frac{|C_h|}{2^{h-i}} - |S| > \frac{2^h/(|S|+1)-1}{2^{h-i}} - |S|
    \ge 2^{i-\log_2(|S|+1)}-|S|-1 \ge 0 \label{first}
  \end{equation}
  for $i\ge 2\log_2 |S|+2$.  Since $i_0\ge 2\log_2 |S|+2$, this establishes that $C_i$ is non-empty for each $i\in\{i_0,\ldots,h\}$.

  For each $i\in\{0,\ldots,h-1\}$, the vertices in $C_i$ are adjacent to at least $2|C_i|$ vertices of $D_{i+1}$, so $|C_{i+1}|\ge 2|C_i|-|S_{i+1}|$.  Iterating this $h-i$ times gives:
  \begin{equation}
    |C_h| \ge 2^{h-i}|C_i| - \sum_{j=i+1}^h 2^{h-j}|S_j| \ge 2^{h-i}|C_i| - 2^{h-i-1}|S| \enspace . \label{ch_lower_bound}
  \end{equation}
  Suppose that $|C_{i^*}|=2^{i^*}$ for some $i^*\in\{0,\ldots,h\}$. Then \cref{ch_lower_bound} implies that $|C_h|\ge 2^h-2^{h-i^*-1}|S|$.  Therefore, by \cref{first},
  \begin{equation}
      |C|
         =\sum_{i=0}^h |C_i| \ge \sum_{i=0}^h\left(\frac{|C_h|}{2^{h-i}} - |S|\right)
       > 2|C_h| - 1 -(h+1)|S|
       \ge 2^{h+1}-2^{h-i^*}|S|-1 - (h+1)|S| \enspace .  \label{crux}
  \end{equation}
  However, $2^h > |V(G_h)|/2 \ge |C|$, and combining this with \cref{crux} gives $2^h > 2^{h+1} - 2^{h-i^*}|S|-1-(h+1)|S|$.
  Rewriting this inequality, we get
  \begin{equation}
     2^h < 2^{h-i^*}|S|+ 1 + (h+1)|S| \enspace . \label{cruxi}
  \end{equation}
  Multiplying each side of \cref{cruxi} by $2^{i^*-h}$ then gives:
  \begin{align*}
    2^{i^*} & <  |S| + 2^{i^*-h}(1+(h+1)|S|) \\
            & \le |S| + 1 + (h+1)|S| & \text{(since $i^*\le h$, so $2^{i^*-h}\le 1$)} \\
            & = 1+ (h+2)|S| \enspace .
  \end{align*}
  Taking the logarithm of each side then gives $i^* < \log_2(1+(h+2)|S|)\le i_0$.  This establishes that $|C_i|< 2^i$ for each $i\in\{i_0,\ldots,h\}$ and  completes the proof.
\end{proof}

The following lemma shows that every tree-partition of $G_h$ must have a part with a large unrelated set that is far from the leaves of $T_h$ and will be used to obtain our first set $\mathcal{R}_1$.

\begin{lem}\label{startup}
  For every $\alpha\in(0,1/4)$, there exists $h_0$ such that the following is true, for all integers $h\ge  h_0$ and all $c\in[1,h]$.  If $\mathcal{T}:=\{B_x:x\in V(T)\}$ is a tree-partition of $G_h$ of width less than $ch$ then there exists a node $x\in V(T)$ and a subset $R\subseteq B_x$ such that
  \begin{compactenum}[(i)]
    \item $R$ is unrelated;
    \item $|R|\ge \alpha^2 h/c$; and
    \item Each vertex in $R$ has height at least $\alpha h$.
  \end{compactenum}
\end{lem}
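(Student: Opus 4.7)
My plan is to choose $x$ as a weighted centroid of $T$, where vertex $y\in V(T)$ has weight $|B_y|$.  Standard centroid arguments for trees, together with the fact that for every connected subgraph $H$ of $G_h$ the set $\{y\in V(T):B_y\cap V(H)\ne\emptyset\}$ induces a connected subtree of $T$ (a direct consequence of the tree-partition property), imply that $B_x$ is a balanced separator of $G_h$: every component of $G_h-B_x$ has at most $|V(G_h)|/2$ vertices.  Since $|B_x|<ch$, Lemma~\ref{small_depth_separator} then guarantees $B_x\cap V(D_i)\ne\emptyset$ for every $i\in\{i_0,\ldots,h\}$, with $i_0=O(\log(ch))$; because $c\le h$, choosing $h_0$ sufficiently large in terms of $\alpha$ ensures $i_0<\alpha h$ for all $h\ge h_0$.

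To build the unrelated set $R$, I plan to iteratively apply Lemma~\ref{one_path} inside a shrinking nest of sibling subtrees of $T_h$.  Set $T^{(0)}:=T_h$.  At step $j\ge 0$, apply Lemma~\ref{one_path} to $T^{(j)}$ with separator $B_x\cap V(T^{(j)})$, obtaining a vertex $v^{(j)}\notin B_x$ of depth at most $\log_2(ch)+1$ in $T^{(j)}$, whose parent lies in $(B_x\cap V(T^{(j)}))\cup\{r^{(j)}\}$, where $r^{(j)}$ is the root of $T^{(j)}$, together with a $B_x$-avoiding path in $T^{(j)}$ from $v^{(j)}$ down to a leaf.  When the parent of $v^{(j)}$ is in $B_x$, add this parent to $R$.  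Then let $T^{(j+1)}$ be the subtree of $T^{(j)}$ rooted at the sibling of $v^{(j)}$ in $T^{(j)}$, so that any vertex added later from within $T^{(j+1)}$ is in a subtree disjoint from all ancestors and descendants of previously added vertices.  Consequently every two vertices added to $R$ are unrelated in $T_h$.

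Each iteration decreases the height of the current subtree by at most $\log_2(ch)+1$, so one can run about $(1-\alpha)h/(\log_2(ch)+1)$ iterations before the remaining subtree has height below $\alpha h$.  In every iteration for which $r^{(j)}\in B_x$, the parent of $v^{(j)}$ is automatically in $B_x$ (as required by Lemma~\ref{one_path}), and such a parent is then added to $R$ at depth at most $\alpha h+O(\log(ch))\le(1-\alpha)h$ in $T_h$, giving height at least $\alpha h$ as required.

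The main obstacle is controlling the ``bad'' iterations in which $r^{(j)}\notin B_x$ and the parent of $v^{(j)}$ turns out to be $r^{(j)}$, contributing nothing to $R$.  I expect to rule these out using Lemma~\ref{small_depth_separator}: since $B_x$ must hit every horizontal path $D_i$ for $i\ge i_0$, the recursion cannot repeatedly ``escape'' into a sibling subtree whose root avoids $B_x$ without forcing a violation of the depth-hitting property of the separator at some $D_i$ intersecting the current subtree.  Bounding the number of bad steps this way, and then checking by arithmetic that the remaining good steps number at least $\alpha^2 h/c$, is the delicate quantitative part of the proof.
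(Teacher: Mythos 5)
The central construction fails requirement (i). At step $j$ you add to $R$ the parent $p^{(j)}\in B_x$ of $v^{(j)}$ and then recurse into the subtree rooted at the \emph{sibling} of $v^{(j)}$; but that sibling is a child of $p^{(j)}$, so every vertex of $T^{(j+1)}$ --- and hence every vertex placed in $R$ at any later step --- is a descendant of $p^{(j)}$. Your claim that later additions are ``in a subtree disjoint from all ancestors and descendants of previously added vertices'' is false for $p^{(j)}$ itself: the set $R$ you build is a chain under the ancestor relation, the opposite of unrelated. The natural repair, collecting the vertices $v^{(j)}$ instead (these \emph{are} pairwise unrelated), destroys the property $R\subseteq B_x$, since the $v^{(j)}$ avoid $B_x$; you would then have to show that they all end up in a single part of the tree-partition. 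By \cref{tree_thingy} this requires that they all have a neighbour in $B_x$ \emph{and} all lie in one component of $G_h-B_x$, and establishing that single-component statement is precisely the hard step in the paper: it combines the $B_x$-avoiding leaf-paths from \cref{one_path} with the horizontal paths $D_i$ to obtain a subdivided grid, and then invokes \cref{grid_connectivity}; this is also exactly where the factor $1/c$ in the bound $\alpha^2h/c$ comes from. Your proposal contains no substitute for this step.

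There is also a quantitative gap that survives even if (i) were fixed. A nested, sequential use of \cref{one_path} only guarantees roughly $(1-\alpha)h/(\log_2(ch)+1)$ iterations, each contributing at most one vertex, so the size you can certify is $O(h/\log(ch))$; since the lemma must hold for every $c\in[1,h]$, taking $c=O(1)$ makes the target $\alpha^2h/c=\Theta(h)$ unreachable by this accounting. The paper sidesteps this by applying \cref{one_path} \emph{in parallel} rather than sequentially: a counting argument on the Steiner tree of the $\Omega(h)$ high vertices of $B_x$ (supplied by \cref{small_depth_separator}) yields $\Omega(h)$ pairwise disjoint subtrees hanging off $B_x$, and one application of \cref{one_path} in each produces $\Omega(h)$ unrelated candidates simultaneously, after which \cref{grid_connectivity} and \cref{tree_thingy} select $\alpha^2h/c$ of them inside one neighbouring bag. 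Finally, your plan to control the ``bad'' iterations (parent of $v^{(j)}$ equal to $r^{(j)}\notin B_x$) via \cref{small_depth_separator} is, as you acknowledge, not carried out; that lemma constrains which levels $D_i$ the separator meets globally, but says nothing about where $B_x$ sits inside the particular nested subtrees your recursion selects, so it does not obviously bound the number of bad steps.
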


\begin{proof}
  It is well-known and easy to show that there exists a node $x$ of $T$ such that $G-B_x$ has no component with more than $|V(G_h)|/2$ vertices \cite[(2.6)]{robertson.seymour:graph}. Let $Y$ be the set of vertices in $B_x$ that have height at least $h/4$.  By \cref{small_depth_separator}, $|Y|\ge 3h/4 - O(\log (ch+1))$.

  Let $T_Y$ be the minimal (connected) subtree of $T_h$ that spans $Y$, and let $L$ be the set of leaves of $T_Y$ (excluding the root of $T_Y$ if this happens to be contained in $Y$).  Observe that $L\subseteq Y$ is an unrelated set. Therefore, $L$ satisfies (i) and, by definition, each vertex in $L$ has height at least $h/4 > \alpha h$, so $L$ satisfies (iii).  If $|L|\ge \alpha h \ge \alpha^2 h/c$ then $L$ also satisfies (ii).  In this case, we can take $R:=L$ and we are done.  We now assume that $|L|< \alpha h$.

  Let $Z$ consist of all vertices in $V(T_h)\setminus V(T_Y)$ whose parents are in $Y\setminus L$.   Observe that $Z$ is an unrelated set of vertices each having height at least $h/4$. For each $v$ of $T_Y$, let $d_v$ denote the number of children of $v$ in $T_Y$.  Then,
  \[
     \sum_{v\in Y\setminus L} (d_v-1)
     \le \sum_{v\in V(T_Y)\setminus L} (d_v-1)
     = |L|-1 \enspace ,
  \]
  where the second equality is a standard fact about rooted trees.
  Rewriting this, we get $\sum_{v\in Y\setminus L} {d_v} < |Y\setminus L| + |L| = |Y|$.  On the other hand, each $v\in Y\setminus L$ contributes $2-d_v$ vertices to $Z$, so
  \[
    |Z| = \sum_{v\in Y\setminus L} (2-d_v) \enspace .
  \]
  Combining these two formulas, we obtain
  \[
    |Z| \ge 2|Y\setminus L| - |Y| = |Y| - 2|L|
    \ge 3h/4-O(\log(ch+1)) - 2\alpha h
    \ge h/4-O(\log(ch+1)) \enspace .
  \]
  Refer to \cref{paths}.  For each $r\in Z$, \cref{one_path} applied to the subtree of $T_h$ rooted at $r$ with $S=B_x$ implies that $r$ has a descendant $v$ such that
  \begin{inparaenum}[(a)]
    \item the parent of $v$ is in $B_x\cup\{r\}$;
    \item the height of $v$ is at least $h/4-O(\log(ch+1))$; and
    \item $T_h-B_x$ contains a path $Q_{v}$ from $v$ to a leaf of $T_h$.
  \end{inparaenum}

  Form the set $Z'$ using the following rule for each $r\in Z$: If the vertex $v$ described in the preceding paragraph is a child of $r$ then place $r$ in $Z'$, otherwise place $v$ in $Z'$. Since each $r\in Z$ is a child of some vertex in $Y\subseteq B_x$, this ensures that the parent of $v$ is in $B_x$ for each $v\in Z'$.  Since $Z$ is an unrelated set and $Z'$ is obtained by replacing each vertex in $Z$ with one of its descendants, $Z'$ is an unrelated set.  Since $\alpha < 1/4$, for sufficiently large $h$, $|Z'|\ge h/4 - O(\log(ch+1)) \ge \alpha h$ and each vertex in $Z'$ has height at least $h/4 - O(\log(ch+1)) \ge \alpha h$.

  \begin{figure}
    \begin{center}
      \includegraphics{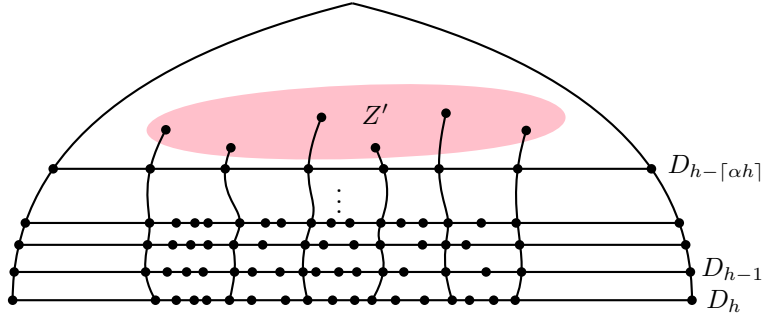}
    \end{center}
    \caption{A step in the proof of \cref{startup}.}
    \label{paths}
  \end{figure}

  Now observe that the union of the paths $Q_{v}$ for each $v\in Z'$ and the paths $D_{h-\lceil\alpha h\rceil+1},\ldots,D_{h}$ contains a subgraph $G'$ isomorphic to a graph that can be obtained from the grid $G_{\lceil \alpha h\rceil\times\lceil \alpha h\rceil}$ by subdividing horizontal edges.  Since $B_x$ does not contain any vertex of $Q_{v}$ for each $v\in Z'$,  $B_x\cap V(G')$ contains only vertices corresponding to subdivision vertices.  Therefore, by \cref{grid_connectivity}, some component of $G-B_x$ contains a subset $R\subseteq Z'$ of size at least $\alpha^2 h/c$.  Each element in $R$ has a parent in $B_x$.  By \cref{tree_thingy} some neighbour $y$ of $x$ in $T$ has a bag $B_y$ that contains all of $R$.  This completes the proof.
\end{proof}

A set $\mathcal{R}:=\{R_1,\ldots,R_q\}$ of subsets of $V(T_h)$ is \defin{$(k,\ell,m)$-compact} if it has the following properties:

\begin{compactenum}
  \item For each $i\in\{1,\ldots,q\}$, $R_i$ is unrelated and $|R_i|\ge k$.
  \item For each $i\in\{1,\ldots,q\}$ there exists a common ancestor $a_i$ of $R_i$ such that $\dist_{T_h}(v,a_i)\le\ell$ for each $v\in R_i$.
  \item $a_1,\ldots,a_q$ are unrelated and each has height at least $m$.
\end{compactenum}

This definition has the following implications:
\begin{inparaenum}[(i)]
  \item $\cup \mathcal{R}$ is an unrelated set; and
  \item If $a_i$ precedes $a_j$ in the left-to-right ordering of $\{a_1,\ldots,a_q\}$ then every element of $R_i$ precedes every element of $R_j$ in the left-to-right order of $\cup\mathcal{R}$.
\end{inparaenum}
We say that a vertex $v$ of $T_h$ is \defin{compatible} with $S\subseteq V(T_h)$ if the parent of $v$ is in $S$ and $T_h-S$ contains a path from $v$ to a leaf of $T_h$.  A $(k,\ell,m)$-compact set $\mathcal{R}$ is \defin{compatible} with $S$ if each vertex in $\cup\mathcal{R}$ is compatible with $S$.

\begin{lem}\label{compatible_set}
  Let $\mathcal{R}:=\{R_1,\ldots,R_q\}$ be a $(k,\ell,m)$-compact set,  and let $S\supseteq \cup\mathcal{R}$ have size $1\le |S|< 2^{m-\ell-2}$.  Then, there exists a $(2k,\ell + \log_2|S|+2,m)$-compact set $\mathcal{R}':=\{R_1',\ldots,R_q'\}$ that is compatible with $S$.
\end{lem}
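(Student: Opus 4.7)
The plan is to apply \cref{two_paths} independently at each vertex $r$ of each part $R_i$, viewing $T_r$ (the subtree of $T_h$ rooted at $r$) as a complete binary tree and feeding it the local restriction $S \cap V(T_r)$. Each such invocation will produce two unrelated descendants $v_{r,1}, v_{r,2}$ of $r$ that are each compatible with $S$. I will then set $R_i' := \bigcup_{r \in R_i}\{v_{r,1}, v_{r,2}\}$ and define $\mathcal{R}' := \{R_1', \ldots, R_q'\}$, reusing $a_1, \ldots, a_q$ as common ancestors.

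The first step is to check that the numerical hypotheses of \cref{two_paths} are satisfied at every $r \in R_i$. Because each $v \in R_i$ is a descendant of $a_i$ at distance at most $\ell$ and $a_i$ has height at least $m$, every $r \in R_i$ has height $h_r \ge m - \ell$, so $T_r$ is a complete binary tree of height $h_r$. Since $r \in \cup\mathcal{R} \subseteq S$, the set $S \cap V(T_r)$ contains $r$, hence $|S \cap V(T_r)| \ge 1$. Moreover, $|S \cap V(T_r)| \le |S| < 2^{m-\ell-2} \le 2^{h_r-2} < 2^{h_r-1}$, so \cref{two_paths} applies. It yields two unrelated vertices $v_{r,1}, v_{r,2} \in V(T_r)$, each at distance at most $\log_2|S \cap V(T_r)| + 2 \le \log_2|S| + 2$ from $r$, each with parent in $(S \cap V(T_r)) \cup \{r\} \subseteq S$, and each joined to a leaf of $T_r$ (hence of $T_h$) by a path in $T_r - S$. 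So each $v_{r,b}$ is compatible with $S$.

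It remains to verify that $\mathcal{R}'$ is $(2k,\ell+\log_2|S|+2,m)$-compact. For a fixed $r$, $v_{r,1}$ and $v_{r,2}$ are unrelated by \cref{two_paths}, and for distinct $r, r' \in R_i$, the vertices $v_{r,b}$ and $v_{r',b'}$ are descendants of the unrelated vertices $r, r'$ and are therefore themselves unrelated. This shows both that $|R_i'| = 2|R_i| \ge 2k$ and that $R_i'$ is unrelated. The vertex $a_i$ remains a common ancestor of $R_i'$, and $\dist_{T_h}(v_{r,b}, a_i) \le \dist_{T_h}(v_{r,b}, r) + \dist_{T_h}(r, a_i) \le (\log_2|S|+2) + \ell$. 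Since the $a_i$ are unchanged, they are still unrelated and of height at least $m$. I do not expect a real obstacle: once one observes that \cref{two_paths} can be localised to each $T_r$ and that the global bound $|S| < 2^{m-\ell-2}$ supplies exactly the slack needed to run it simultaneously in each subtree, the proof is essentially forced; the only delicate point is verifying $r \in S$ (so that $S \cap V(T_r)$ is nonempty), which is immediate from $\cup\mathcal{R} \subseteq S$.
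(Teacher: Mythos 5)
Your proposal is correct and follows the same route as the paper's proof: replace each $r\in R_i$ by the two descendants supplied by \cref{two_paths} applied in the subtree rooted at $r$, keeping the ancestors $a_i$. You supply more detail than the paper does (verifying the height and size hypotheses of \cref{two_paths} and that the parents land in $S$), but the argument is essentially identical.
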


\begin{proof}
  For each $i\in\{1,\ldots,q\}$ and each $r\in R_i$, replace $r$ with the descendants $v_1$ and $v_2$ of $r$ described in \cref{two_paths} and call the resulting set $R_i'$.   Then $|R_i'|=2|R_i|\ge 2k$ and $\dist_{T_h}(v,a_i)\le \ell+\log_2|S|+2$ for each $v\in R_i'$, where $a_i$ is the common ancestor of $R_i$ in the definition of $(k,\ell,m)$-compact.  Therefore $\mathcal{R}':=\{R_1',\ldots,R_q'\}$ is a $(2k,\ell + \log_2(|R|)+2,m)$-compact set.
\end{proof}

The next lemma is the last ingredient in the proof of \cref{main_thm}.
\begin{lem}\label{big_lemma}
  Let $\mathcal{T}:=\{B_x:x\in V(T)\}$ be a tree-partition of $G_h$ and let $\mathcal{P}:=\{P_y:y\in V(P)\}$ be a path-partition of $G_h$.  Then there exists $(x,y)\in V(T)\times V(P)$ such that $|B_x\cap P_y| \ge 2^{\Omega(\sqrt{\log h})}$.
\end{lem}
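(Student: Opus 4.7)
The plan is to carry out the iterative construction of doubling unrelated sets that is sketched informally just after the statement of Theorem~\ref{main_thm_tree}. Write $c := \max\{|B_x \cap P_y| : (x,y) \in V(T) \times V(P)\}$; the goal is to show $c \geq 2^{\Omega(\sqrt{\log h})}$, so we may assume $c \leq 2^{\sqrt{\log h}}$. Since $G_h$ has diameter at most $2h$, Observation~\ref{diameter_spread} applied to each bag with $R = B_x$ shows that $|B_x| \leq c(2h+1)$, so $\mathcal{T}$ has width $O(ch)$.

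To seed the construction, apply Lemma~\ref{startup} (with, say, $\alpha = 1/5$) to obtain a node $x_1 \in V(T)$ and an unrelated set $L \subseteq B_{x_1}$ of size $q_1 \geq h/(25c)$ consisting of vertices of height at least $h/5$; the family $\mathcal{R}_1 := \{\{v\} : v \in L\}$ of singletons is then $(1, 0, h/5)$-compact and lies entirely in $B_{x_1}$. The plan is to build a sequence $\mathcal{R}_1, \mathcal{R}_2, \ldots, \mathcal{R}_{t+1}$ together with nodes $x_1, \ldots, x_{t+1} \in V(T)$ so that $\mathcal{R}_i$ is $(2^{i-1}, \ell_i, h/5)$-compact with $\cup \mathcal{R}_i \subseteq B_{x_i}$, where $\ell_i = (i-1)(\log_2(ch) + O(1))$ and the number of parts satisfies $q_i \geq q_1/(10c)^{i-1} - O(1)$.

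The induction step takes $\mathcal{R}_i \subseteq B_{x_i}$ and applies Lemma~\ref{compatible_set} with $S := B_{x_i}$ (whose size is at most $c(2h+1) < 2^{h/5 - \ell_i - 2}$ as long as $\ell_i$ stays well below $h/5$) to obtain a doubled $(2^i, \ell_i + \log_2(ch) + O(1), h/5)$-compact family $\mathcal{R}_i'$, every vertex $v$ of $\cup\mathcal{R}_i'$ having its parent in $B_{x_i}$ and admitting a path $Q_v$ to a leaf of $T_h$ that avoids $B_{x_i}$. The $2^i q_i$ paths $Q_v$, taken in left-to-right order and threaded together by sufficiently deep layer-paths $D_j$, realize a subdivided grid of dimensions $X \times Y$ with $X = 2^i q_i$ columns and $Y = \Theta(h)$ rows inside $G_h$, whose subdivision vertices coincide with a subset of $B_{x_i}$. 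Lemma~\ref{grid_connectivity} with $p = O(c)$ (because $|B_{x_i}| = O(ch) = O(c) \cdot Y$) then produces a component of $G_h - B_{x_i}$ containing $X/p = 2^i q_i / O(c)$ consecutive columns. Since the left-to-right ordering inside $\cup \mathcal{R}_i'$ groups each part $R_{i+1,j}'$ into contiguous columns (this is the second implication noted after the definition of compactness), we lose only $O(1)$ parts at the boundary and at least $q_i/(10c) - O(1)$ entire parts survive in the same component. Observation~\ref{tree_thingy} then places all these surviving parts in a single neighbour bag $B_{x_{i+1}}$, yielding $\mathcal{R}_{i+1}$.

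Running this for $t = \Theta(\sqrt{\log h})$ rounds keeps $q_{t+1} \geq 1$ (the constraint is $t \log c \lesssim \log h$, which is satisfied for our choice given $c \leq 2^{\sqrt{\log h}}$) and keeps $\ell_{t+1} = O(\sqrt{\log h} \cdot \log(ch)) \ll h/5$ so the precondition of Lemma~\ref{compatible_set} holds throughout. Picking any $R \in \mathcal{R}_{t+1}$ gives an unrelated set of size $2^t$ inside $B_{x_{t+1}}$ of diameter at most $2\ell_{t+1}$ in $T_h$, hence in $G_h$. Observation~\ref{diameter_spread} then produces a layer $P_y$ with $|R \cap P_y| \geq 2^t / (2\ell_{t+1} + 1) = 2^{\Omega(\sqrt{\log h})}$, and since $R \subseteq B_{x_{t+1}}$ this forces $c \geq 2^{\Omega(\sqrt{\log h})}$, as required.

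I expect the main obstacle to be the bookkeeping inside a single induction step: verifying that the paths $Q_v$ and the depth-paths $D_j$ really form a subdivided grid of the claimed dimensions (so that Lemma~\ref{grid_connectivity} applies cleanly), that the left-to-right order of columns agrees with the tree order inside the compact family (so only $O(1)$ parts are split at the boundaries of surviving column intervals), and that all parameter-preconditions ($|S| < 2^{m-\ell-2}$ in Lemma~\ref{compatible_set}, and $\alpha$ chosen so that the heights of all relevant vertices remain at least $h/5$) hold simultaneously throughout the $t$ rounds.
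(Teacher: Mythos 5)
Your proposal is correct and follows essentially the same route as the paper's proof: seed with \cref{startup} (at $\alpha=1/5$), iterate \cref{compatible_set} plus the subdivided-grid argument via \cref{grid_connectivity} and \cref{tree_thingy} to double the compact family $t=\Theta(\sqrt{\log h})$ times while losing only a $O(c)$ factor and $O(1)$ boundary parts per round, then finish with \cref{diameter_spread} on a single part of size $2^t$ and small diameter. The bookkeeping you flag (grid dimensions, column order matching the left-to-right order, and the precondition $|S|<2^{m-\ell-2}$ with heights tracked as $m_i\ge h/10$) is exactly what the paper's proof verifies, with only trivial constant adjustments (e.g.\ the width bound $c(2h+1)$ versus the $<ch$ hypothesis of \cref{startup}).
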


\begin{proof}
  Let $x\in V(T)$ be a node that maximizes $|B_x|$.  Then $\diam_{G_h}(B_x)\le\diam_{T_h}(B_x) \le 2h$ so, by \cref{diameter_spread}, $|B_x\cap P_y|\ge |B_x|/(2h+1)$ for some $y\in V(P)$.  If $|B_x|\ge h2^{\sqrt{\log_2 h}}$ then there is nothing more to prove, so we may assume that $|B_x| < ch$ where $c:= 2^{\sqrt{\log_2 h}}$.  Note that $c\ge 1$ for every $h\ge 1$.

  By \cref{startup}, with $\alpha:= 1/5$, $T$ contains a node $x_1$ such that $B_{x_1}$ contains an unrelated set $R$ of size $q_1:=|R|\ge h/(25c)$ where each vertex in $R$ has height at least $m:=m_1:=h/5$.  Let  $\mathcal{R}_1:=\{\{v\}:v\in R\}$.  By definition $\mathcal{R}_1$ is a $(1,0,m)$-compact set.  $\mathcal{R}_1$ will be the first in a sequence of sets $\mathcal{R}_1,\ldots,\mathcal{R}_{t+1}$, where $t$ will be fixed below. For each $i\in\{1,\ldots,t+1\}$, $\mathcal{R}_i$ will satisfy the following properties:
  \begin{enumerate}[(a)]
     \item $\mathcal{R}_i$ is a $(2^{i-1},(i-1)(\log_2(ch)+2),m_i)$-compact set, where $m_i \ge h/5-(i-1)(\floor{\log_2(ch)}+2)$. \label{ri_compact}
     \item $q_i:=|\mathcal{R}_i|$, with $q_i > q_{i-1}/(10c) - 2$ if $i\ge 2$. \label{ri_size}
     \item There exists $x_i\in V(T)$ such that $\cup\mathcal{R}_i\subseteq B_{x_i}$. \label{ri_containment}
  \end{enumerate}
  Note that, by a simple inductive argument, one can show that
  \[
  q_i > q_1/(10c)^{i-1} - 3 \enspace .
  \]
  Indeed, the base case $i=1$ holds trivially, and for the inductive case ($i\geq 2$) we have $q_i > q_{i-1}/(10c) - 2 > (q_1/(10c)^{i-2} - 3)/(10c) -2 > q_1/(10c)^{i-1} - 3$.

  It is straightforward to verify that $\mathcal{R}_1$ satisfies (\ref{ri_compact})--(\ref{ri_containment}).
  Let $t:= \min\{t_1,t_2\}$ where $t_1:=\floor{\log_{10c}(q_1/3)}$ and $t_2:=\floor{h/(10(\sqrt{\log_2 h} + \log_2 h) + 2)}$. Observe that, since $c=2^{\sqrt{\log_2 h}}$,  $t_1\ge \log_{10c}(h/75c)\in\Omega(\log_c h)\subseteq \Omega(\sqrt{\log h})$ and that $t_2\ge h/(10(\sqrt{\log_2 h} + \log_2 h) + 2)-1\in\Omega(h/\log h)$.  Therefore $t\in\Omega(\sqrt{\log h})$. These specific values of $t_1$ and $t_2$ are chosen for the following reasons:
  \begin{compactenum}[(i)]
    \item Since $t\le t_1$, $q_{t+1} > q_1/(10c)^{t_1} - 3  \ge 0$, so $q_{t+1}\ge 1$.
    \item Since $t\le t_2$, $m_i\ge h/5-t_2(\floor{\log_2(ch)}+2) \ge h/10$ for each $i\in\{2,\ldots,t+1\}$.
  \end{compactenum}
  We now describe how to obtain $\mathcal{R}_{i+1}$ from $\mathcal{R}_i$ for each $i\in\{1,\ldots,t\}$.  By \cref{compatible_set} (applied to $\mathcal{R}:=\mathcal{R}_i$ and $S:=B_{x_i}$), $T_h$ contains a $(2^i,i\log_2(ch)+2,m)$-compact set $\mathcal{R}_{i+1}^+$ of size $q_i$ that is compatible with $B_{x_i}$. For each $v\in\cup\mathcal{R}_{i+1}^+$, $v$ has height at least $m_{i+1}:=m_i-(\floor{\log_2(ch)}+2)\ge h/5-i(\floor{\log_2(ch)}+2)$.  Therefore $\mathcal{R}_{i+1}^+$ satisfies (\ref{ri_compact}), %
  but does not necessarily satisfy (\ref{ri_containment}).  Next we show how to extract $\mathcal{R}_{i+1}\subseteq\mathcal{R}_{i+1}^+$ that also satisfies (\ref{ri_size}) and (\ref{ri_containment}).

  For each $v\in \cup\mathcal{R}_{i+1}^+$, $T_h-B_{x_i}$ contains a path $Q_v$ from $v$ to a leaf of $T_h$.  The union of the paths in $D_{h-m_{i+1}},\ldots,D_{h}$ and the paths in $\mathcal{C}_i:=\{Q_v:v\in\cup\mathcal{R}_{i+1}^+\}$ contains a subgraph $G'$ isomorphic to a graph that can be obtained from $G_{2^{i}q_i\times m_{i+1}}$ by subdividing horizontal edges.  By \cref{grid_connectivity} applied to $G:=G'$ with $S:=B_{x_i}$ and $p:=ch/m_{i+1}$, some component $X'$ of $G'-B_{x_i}$ contains  $q_i'\ge 2^{i}q_im_{i+1}/(ch)\ge 2^iq_i/(10c)$ consecutive columns $C_1,\ldots,C_{q_i'}$ of $G'$.  The component $X'$ is contained in some component $X$ of $G_h-B_{x_i}$.

  Since $\cup\mathcal{R}_i$ is unrelated, it has a left to-right-order. This order defines a total order $\prec$ on the paths in $\mathcal{C}_i$, in which $Q_v\prec Q_w$ if and only if $v$ precedes $w$ in left-to-right order. The resulting total order $(\prec,\mathcal{C}_i)$ corresponds to the order of the columns in $G'$ and each part in $\mathcal{R}_{i+1}^+$ corresponds to $2^i$ consecutive columns of $G'$. There are at most two parts $R\in\mathcal{R}_i$ such that $0 < |R\cap (C_1\cup\cdots\cup C_{q_i'})| < |R|$.  These two parts account for at most $2(2^{i}-1)$ of the columns in $C_1,\ldots, C_{q_i'}$.  Therefore, the number of parts of $\mathcal{R}_{i+1}^+$ completely contained in $C_1\cup\cdots\cup C_{q_i'}$ is at least
  \begin{align*}
      (q_i'-(2^{i+1}-2))/2^i & > q_i/(10c) - 2 \enspace .
  \end{align*}
  We define $\mathcal{R}_{i+1}\subseteq\mathcal{R}_{i+1}^+$ as the set of parts in $\mathcal{R}_{i+1}^+$ that are completely contained in $C_1\cup\cdots\cup C_{q_i'}$.  The preceding calculation shows that $\mathcal{R}_{i+1}$ satisfies (\ref{ri_size}).  Since $\cup\mathcal{R}_{i+1}$ is contained in a single component $X$ of $G_x-B_{x_i}$ and each vertex in $\cup\mathcal{R}_{i+1}$ has a neighbour (its parent in $T$) in $B_{x_i}$, \cref{tree_thingy} implies that $T$ contains an edge $x_{i}x_{i+1}$ with $\cup\mathcal{R}_{i+1}\subseteq B_{x_{i+1}}$. Therefore $\mathcal{R}_{i+1}$ satisfies (\ref{ri_containment}).

  This completes the definition of $\mathcal{R}_1,\ldots,\mathcal{R}_{t+1}$. Properties~(\ref{ri_compact})--(\ref{ri_containment}) imply that, $\mathcal{R}_{t+1}$ is a $(2^t,t(\log_2(ch)+2),m)$-compact set of size $q_{t+1} \ge 1$ and $\cup\mathcal{R}_{t+1}\subseteq B_{x_{t+1}}$ for some $x_{t+1}\in V(T)$.
  Let $R$ be one of the sets in $\mathcal{R}_{t+1}$.  Since $\mathcal{R}_{t+1}$ is $(2^t,t(\log_2(ch)+2),m)$-compact, $|R|\ge 2^t$ and all vertices in $R$ have a common ancestor $a$ whose distance to each element of $R$ is at most $t(\log_2(ch)+2)$.  Therefore, $\diam_{G_h}(R)\le\diam_{T_h}(R)\le 2t(\log_2(ch)+2)$. By \cref{diameter_spread}, there exists some $P_y\in\mathcal{P}$ with
  \begin{align*}
    |B_{x_{t+1}}\cap P_y|
    & \ge |R\cap P_y| \\
    & \ge \frac{|R|}{\diam_{G_h}(R)+1} \\
    & \ge \frac{2^t}{2t(\log_2(ch)+2)+1} \\
    & = 2^{t-O(\log\log h)} \\
    & = 2^{\Omega(\sqrt{\log h})-O(\log\log h)}
   = 2^{\Omega(\sqrt{\log h})} \enspace . \qedhere
  \end{align*}
\end{proof}

\begin{proof}[Proof of \cref{main_thm_tree}]
  Suppose $G_h\subsetcong T\boxtimes P\boxtimes K_c$ for some tree $T$ and some path $P$.  By \cref{partitions_vs_products}, $G_h$ has a $T$-partition $\mathcal{T}:=\{B_x:x\in V(T)\}$ and a path-partition $\mathcal{P}:=\{P_y:y\in V(P)\}$ such that $|B_x\cap P_y|\le c$ for each $(x,y)\in V(T)\times V(P)$.  By \cref{big_lemma}, there exists $(x,y)\in V(T)\times V(P)$, such that $|B_x\cap P_y| \ge 2^{\Omega(\sqrt{\log h})}$.  Combining these upper and lower bounds on $|B_x\cap P_y|$ implies that $c\ge 2^{\Omega(\sqrt{\log h})}$.
\end{proof}

\begin{proof}[Proof of \cref{main_thm}]
  Let $n:=|V(G_h)|=2^{h+1}-1$.  Suppose that $G_h\subsetcong H\boxtimes P\boxtimes K_c$ for some graph $H$ of treewidth $t$ and maximum degree $\Delta$, some path $P$ and some integer $c$.  Then, by \cref{dingy,partitions_vs_products}, $G\subsetcong T\boxtimes P\boxtimes K_{24 c \Delta (t+1)}$ for some tree $T$.  By \cref{main_thm_tree} $24 c \Delta (t+1) \in \Omega(2^{\sqrt{\log h}})$, so $c\Delta t \ge 2^{\Omega(\sqrt{\log h})} = 2^{\Omega(\sqrt{\log\log n})}$.
\end{proof}

\section{Open Problems}

We know that every planar graph $G$ is contained in a product of the form $H\boxtimes P\boxtimes K_3$ where $\tw(H)\le 3$ \cite{dujmovic.joret.ea:planar}. \cref{main_thm_tree} states that, for every $c$, there exists a planar graph of maximum degree $5$ that is not contained in any product of the form $T\boxtimes P\boxtimes K_c$ where $T$ is a tree and $P$ is a path.  This leaves the following open problem:

\begin{quote}

  Is every planar graph $G$ of maximum degree $\Delta$ contained in a product of the form $H\boxtimes P\boxtimes K_c$ where the treewidth of $H$ is $2$, $P$ is a path, and $c$ is some function of $\Delta$?
\end{quote}

\cref{two_tree} and \cref{partitions_vs_products} show that $G_h$ is a subgraph of $H\boxtimes P$ where $H$ has treewidth $2$ (and is even outerplanar) and $P$ is a path. Our proof breaks down in this case because, unlike tree-partitions, outerplanar-partitions do not satisfy \cref{tree_thingy}.  Indeed, the outerplanar-partition illustrated in \cref{two_tree} contains a part $B_x$ and a component $X$ of $G_h-B_x$ with $|N_{G_h}(B_x)\cap V(X)|=h$.  In a tree-partition this would imply that $|N_{G_h}(B_x)\cap V(X)\cap B_y|=h$ for some other part $B_y$ of the partition.  In contrast, for the outerplanar-partition shown in \cref{two_tree}, $|N_{G_h}(B_x)\cap V(X)\cap B_y|\le 1$ for each $y\in V(H)$.

\begin{figure}
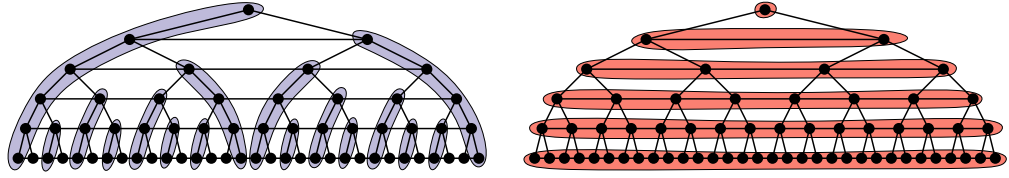

  \begin{center}
    \begin{tabular}{cc}
      \includegraphics[scale=.7]{figs/g_5-2} & \includegraphics[scale=.7]{figs/g_5-3}
    \end{tabular}
  \end{center}
  \caption{An outerplanar-partition $\mathcal{H}$ and a path-partition $\mathcal{P}$ of $G_5$ for which $|B\cap P|\le 1$ for each $B\in\mathcal{H}$ and $P\in\mathcal{P}$.}
  \label{two_tree}
\end{figure}

\section*{Acknowledgements}

Much of this research was done during the First Adriatic Workshop on Graphs and Probability, June 5--10, in Hvar, Croatia.  The authors are grateful to the organizers and other workshop participants for providing an optimal working environment.  The authors would especially like to thank Nina~Kamčev for helpful discussions.

\bibliographystyle{plainurlnat}
\bibliography{tp}

\end{document}